\newtheorem{theorem}{Theorem}
\newtheorem{lem}{Lemma}
\newtheorem{prop}{Proposition}
\newtheorem{remark}{Remark}
\newcommand{\eps}{\varepsilon}
\author{A.I.Nazarov, D.M.Stolyarov, and P.B.Zatitskiy}
\title{Tamarkin equiconvergence theorem\\ and trace formula revisited}
\newcommand{\Span}{{\bf Span}}
\newcommand{\tr}{{\bf tr}}
\newcommand{\OOO}{\textup{O}}
\newcommand{\UUU}{\textup{u}}
\newcommand{\Sp}{{\bf Sp}}
\newcommand{\ff}{f}
\newcommand{\DD}{\Xi}
\newcommand{\I}{I}
\renewcommand{\leq}{\leqslant}
\renewcommand{\geq}{\geqslant}
\renewcommand{\ge}{\geqslant}
\begin{document}
\maketitle
\abstract{We obtain a simple formula for the first-order trace of a regular differential operator on a segment perturbated by a multiplication operator. The main analytic ingredient of the proof is an improvement of the Tamarkin equiconvergence theorem.}

\tableofcontents
\selectlanguage{english}

\section{Introduction}
\subsection{Historical remarks}
Consider a formal differential expression of order $n\ge2$,
\begin{equation}\label{operator}
\ell :=(-i)^n D^{n} + \sum\limits_{k = 0}^{n-2} p_k(x) D^k,
\end{equation}
acting on functions on some segment $[a,b]$ ($D$ denotes differentiation in $x$). We assume $p_k$ to be summable functions. Let $P_j$ and $Q_j$, 
$j \in \{0,\dots,n-1\}$, be polynomials whose degrees do not exceed $n-1$. Then one can form the boundary conditions:
\begin{equation}\label{bc}
P_j(D)y(a) + Q_j(D)y(b) = 0,\qquad j\in\{0,\dots,n-1\},
\end{equation}
where $y$ is an arbitrary function.

Let $d_j$, $j \in \{0,\dots,n-1\}$, be the maximum of degrees of $P_j$ and $Q_j$. Suppose $a_j$ and $b_j$ are the $d_j$-th coefficients of $P_j$ and $Q_j$ 
respectively. We assume that the system of boundary conditions (\ref{bc}) is normalized, i.e. $\sum\limits_j d_j$ is minimal among all the systems of 
boundary conditions that can be obtained from (\ref{bc}) by linear bijective transformations. See \cite[Ch. II, \S4]{Nay} for a detailed explanation and 
\cite{Shk} for a more advanced treatment. We call the system (\ref{bc}) \emph{almost separated} if after some permutation of the boundary 
conditions we have
$$\aligned
&\mbox {for}\ \ n=2m: && b_j = 0\ \ \mbox{if}\ \ j < m; \quad a_j = 0\ \ \mbox{if}\ \ j\geqslant m; \\
&\mbox {for}\ \ n=2m+1: && b_j = 0\ \ \mbox{if}\ \ j < m; \quad a_j = 0\ \ \mbox{if}\ \ j> m; \quad a_mb_m\ne0. 
\endaligned
$$

The differential expression \eqref{operator} and the boundary conditions \eqref{bc} generate an 
operator~${\mathbb L}$ (see~\cite[Ch. I]{Nay} for this standard procedure). 
We assume these boundary conditions to be Birkhoff regular (see \cite[Ch. II, \S4]{Nay}). 
We underline that we do not require our operator to be self-adjoit;
in particular, all the coefficients may be non-real.

We note that the operator ${\mathbb L}$ has purely discrete spectrum (see \cite[Ch. I]{Nay}) and denote it by $\{\lambda_N\}_{_{N=1}}^{^\infty}$. We always 
enumerate the points of a spectrum in ascending order of their absolute values according to the multiplicity of eigenvalues, e.g., we assume 
that $|\lambda_N| \leqslant |\lambda_{N+1}|$.
 
Let ${\mathbb Q}$ be an operator of multiplication by a function $q \in L^1([a,b])$.  
Then, ${\mathbb L} + {\mathbb Q}$ also has purely discrete spectrum $\{\mu_N\}_{_{N=1}}^{^\infty}$.

In the previous paper \cite{ZNS}, the authors obtained a formula for regularized trace  
\begin{equation}\label{trace}
\sum\limits_{N = 1}^{\infty} (\mu_N - \lambda_N)
\end{equation}
in terms of degrees of $P_j$ for the case of a self-adjoint semibounded operator with discrete spectrum on the halfline $\mathbb{R}_+$ 
(the above series converges iff $\int q = 0$, see Theorem~1 in~\cite{SKP}, 
otherwise one has to regularize the trace to get something worth counting). Partial cases of this problem were considered earlier in papers \cite{KP1}, 
\cite{SKP}, \cite{KP3}, and in our preprint \cite{PoSledam}. 

We conjectured that a similar formula should be valid for the case of an interval at least if the boundary conditions are almost separated. 
This is really the case, though the details are dramatically different. In \cite{ZNS} we used the theorem on asymptotic behavior of the spectral functions 
of ${\mathbb L}$ and of the operator generated by the truncated expression and the same boundary conditions \eqref{bc} obtained in \cite{Ko1}, \cite{Ko2}. 
Surprisingly, for the case of an interval the corresponding result was not known yet! So we had to prove this theorem, which refines the classical 
equiconvergence result of Tamarkin (see \cite{T} or Theorem~$1.5$ in \cite{Minkin}).

Theory of regularized traces originated in the 50-th. We refer the reader to the survey \cite{SadPod} for the historical scenery of the subject in general. 
We mention only several results that our one generalizes. The first paper where such problems were considered was \cite{GL}, the formula of regularized 
trace was calculated for the perturbation of a self-adjoint second order operator by a multiplication operator. Some particular cases of fourth order 
operators were treated in \cite{HK}, \cite{LS} and \cite{turkish}. Operators of an arbitrary order without lower-order coefficients was 
considered in \cite{Sh}. A formula for regularized trace was obtained for general Birkhoff regular boundary conditions. However, we should mention 
that the paper \cite{Sh} deals with the case of a more regular function $q$ and does not provide short answers for the cases of almost separated and 
quasi-periodic boundary conditions. A special case of boundary conditions (all derivatives of even order vanish on both ends of the interval) for 
self-adjoint operators of even order with lower-order coefficients was considered in \cite{Sad2}, where formulas for ${\cal S}(q)$ and for traces of 
higher order were given in terms of zeta function.

\subsection{Setting of the problem and formulation of results}

Let ${\mathbb L}_0$ be the operator generated by the differential expression $(-i)^n D^{n}$ and the boundary conditions \eqref{bc}. Denote by 
$\{\lambda^0_N\}_{_{N=1}}^{^\infty}$ the eigenvalues of ${\mathbb L}_0$. Consider also the Green functions of operators ${\mathbb L}_0 - \lambda$ and ${\mathbb L} - \lambda$, 
which we denote by $G_0(x,y,\lambda)$ and $G(x,y,\lambda)$, respectively. Then our main estimate reads as follows.

\begin{theorem}\label{Spint}
For every sequence $R=R_l \rightarrow \infty$ separated from $|\lambda^0_N|^{\frac{1}{n}}$ the integral 
\begin{equation*}
\int\limits_{|\lambda|=R^{n}} |(G_0 - G)(x,y,\lambda)|\, |d\lambda|  
\end{equation*}
tends to zero uniformly in $x,y\in [a,b]$.
\end{theorem}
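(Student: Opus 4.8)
The plan is to begin from the second resolvent identity and then feed in the classical Birkhoff-type bounds for the Green functions. Writing $R_0=({\mathbb L}_0-\lambda)^{-1}$ and $R=({\mathbb L}-\lambda)^{-1}$, and observing that the difference of the two operators is precisely the lower-order expression $V={\mathbb L}-{\mathbb L}_0=\sum_{k=0}^{n-2}p_k(x)D^k$, the identity $R_0-R=R_0VR$ gives, on the level of integral kernels,
\begin{equation*}
(G_0-G)(x,y,\lambda)=\int_a^b G_0(x,z,\lambda)\sum_{k=0}^{n-2}p_k(z)\,D_z^kG(z,y,\lambda)\,dz.
\end{equation*}
Thus the whole matter reduces to estimating $G_0$ together with the $z$-derivatives of $G$, uniformly on the circles $|\lambda|=R^n$.

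First I would record the standard a priori estimates for Birkhoff regular problems (see \cite[Ch.~II]{Nay}). Setting $\rho=\lambda^{1/n}$, one has, uniformly for $\lambda$ on circles separated from the spectrum and away from the finitely many rays where the ordering of $\mathrm{Re}(\rho\omega_j)$ changes,
\begin{equation*}
|G_0(x,z,\lambda)|\le C|\rho|^{1-n}e^{-c|\rho|\,|x-z|},\qquad |D_z^kG(z,y,\lambda)|\le C|\rho|^{k+1-n}e^{-c|\rho|\,|z-y|},
\end{equation*}
with some $c>0$; the hypothesis that $R$ stays away from $|\lambda^0_N|^{\frac1n}$ is exactly what keeps the circle a fixed distance from the poles of $R_0$, so that these constants are uniform. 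Since the $p_k$ are only summable, establishing the second bound for the full operator ${\mathbb L}$ rather than for ${\mathbb L}_0$ is the delicate point, and it is here that Tamarkin's construction, in the refined form proved in this paper, is invoked; I would quote it at this step. Substituting the two bounds, the integrand is dominated by
\begin{equation*}
C\sum_{k=0}^{n-2}|\rho|^{k+2-2n}\int_a^b e^{-c|\rho|\,|x-z|}\,|p_k(z)|\,dz.
\end{equation*}

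A crude bound is not sufficient: the top term $k=n-2$ carries the power $|\rho|^{-n}=R^{-n}$, and since $|d\lambda|\asymp R^n\,|d\phi|$ on the circle, the contour integral of this term is only $\OOO(1)$. The decay must therefore be extracted from the factor $\int_a^b e^{-cR|x-z|}|p_k(z)|\,dz$, and here the summability of $p_k$ is decisive: I claim this integral tends to $0$ as $R\to\infty$ uniformly in $x$. Indeed, given $\eps>0$, absolute continuity of the measure $|p_k|\,dz$ yields $\delta>0$ with $\int_E|p_k|<\eps$ whenever the Lebesgue measure of $E$ is below $\delta$, so the contribution of $\{|z-x|<\delta\}$ is at most $\eps$, while that of $\{|z-x|\ge\delta\}$ is at most $e^{-cR\delta}\int_a^b|p_k|$, a quantity independent of $x$ that vanishes as $R\to\infty$. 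The lower-order terms $k<n-2$ are even better behaved, contributing $\OOO(R^{k+2-n})=\OOO(R^{-1})$.

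Assembling these estimates, the contour integral is bounded by a constant times $\sup_x\sum_k\int_a^b e^{-cR|x-z|}|p_k(z)|\,dz$ plus terms decaying as a negative power of $R$, and therefore tends to $0$ uniformly in $x,y$. The main obstacle is the uniform exponential estimate for the derivatives of $G$ in the presence of merely summable coefficients; once that is secured, the remaining ingredients are the resolvent identity and the elementary absolute-continuity argument above. A point demanding care is the degeneration of the constant $c$ along the finitely many separating rays of the $\rho$-plane; since these form a set of angles of measure zero and $G_0-G$ stays uniformly bounded there, I would handle those directions by a separate argument and absorb their negligible angular contribution into the final estimate.
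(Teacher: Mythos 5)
Your skeleton is the same as the paper's: the resolvent identity \eqref{K-G}, reduction to bounds on $G_0$ and on the $t$-derivatives of $G$, the correct power count showing the top term $k=n-2$ contributes $\OOO(1)$ to the contour integral, and the absolute continuity of $\int|p_k|$ as the source of the missing decay. But there is a genuine gap at the central step. The estimate you yourself flag as delicate, $|D_z^kG(z,y,\lambda)|\le C|\lambda|^{\frac{k+1-n}{n}}e^{-c|\lambda|^{1/n}|z-y|}$ for the operator with merely summable coefficients, cannot be ``quoted'': the refined Tamarkin-type result you propose to invoke at that point is precisely the theorem under proof, so the argument is circular exactly where the work lies. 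The paper closes this loop self-containedly, and without any exponential factor: differentiate \eqref{K-G} $j$ times in $x$ to get \eqref{k'}, multiply by $p_j(x)$, sum over $j$, and take $L^1$ norms; the uniform non-decaying bounds $|(\Tilde{G_0})^{(j)}_x|\le CR^{j+1-n}$ (first part of Lemma~\ref{asy}) let one absorb the term $CR^{-1}\big\|\sum_j p_jG^{(j)}(\cdot,y,\lambda)\big\|_1$ and conclude $\big\|\sum_j p_jG^{(j)}(\cdot,y,\lambda)\big\|_1\le CR^{-1}$, whence the pointwise bound \eqref{grfest}. Some such absorption argument is indispensable in your write-up; nothing in the classical literature hands you the perturbed bound for $p_k\in L^1$.

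Moreover, the exponential bounds are false as stated, even for $G_0$. By \eqref{GFF} and the asymptotics \eqref{<<}, the kernel contains cross terms of the form $\rho^{\alpha-1}e^{iRw(\rho^{\beta-1}(x-a)+\rho^{\alpha-1}(b-z))}\,\hat\Delta_{\alpha,\beta}/\hat\Delta$ with $\alpha,\beta\le\nu$, whose modulus is comparable to $e^{-cR[(x-a)+(b-z)]}$: for $x$ near $a$ and $z$ near $b$ this is of full size $|\lambda|^{\frac{1-n}{n}}$ although $|x-z|$ is close to $b-a$. This corner phenomenon is exactly why Lemma~\ref{asy} asserts convergence to zero only on compacts separated from the diagonal \emph{and the corners}. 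Consequently your absolute-continuity argument, centered solely at $z=x$, misses possible mass of $|p_k|$ near the endpoints; it is repairable (each term decays in the distance of $z$ to one of the points $x$, $a$, $b$), but the repair requires the term-by-term analysis of $\Delta_{\alpha,\beta}/\Delta$, which is essentially Lemma~\ref{asy} itself. Finally, your treatment of the critical rays is too weak: since $|d\lambda|=R^n\,d\phi$ on the circle, mere uniform boundedness of $G_0-G$ on small angular neighborhoods gives a contribution of order $R^n\cdot\eta\cdot\sup|G_0-G|$, so you need the quantitative bound $|G_0-G|\le CR^{-n}$ there --- which again comes only from \eqref{grfest}, valid on the whole circle, and not from any exponential decay (which degenerates completely on those rays). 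The paper sidesteps all three difficulties by combining the global bound \eqref{grfest} with the a.e.\ pointwise vanishing of $R^{n-1}\Tilde{G_0}$ and a dominated-convergence argument, rather than seeking quantitative decay in $|x-z|$.
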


This theorem is a generalization of the celebrated Tamarkin equiconvergence theorem mentioned above. Denote by $\theta_R(x,y)$ the integral 
$\int\limits_{|\lambda|=R^{n}} (G_0 - G)(x,y,\lambda)\, d\lambda$. Then the Tamarkin theorem states that the integral operator with the kernel 
$\theta_R$ considered as an operator from $L^1$ to $L^{\infty}$ tends to zero in the strong operator topology. Theorem~\ref{Spint} implies the same 
convergence in the norm operator topology. Though we found this theorem during our study of regularized traces, it is interesting in itself.\medskip 

Now we turn to traces. Unfortunately, a beautiful formula as the one we had in~\cite{ZNS} does not hold for the general problem. So, we need to introduce
some notation.

Let $\nu_1=[\frac{n+1}{2}]$ and $\nu_2=[\frac{n}{2}]$. For $\kappa=1,2$ denote by $\hat {\cal W}^{[\kappa]}$ the matrix
\begin{equation}\label{hat}
\hat {\cal W}^{[\kappa]}=
\begin{pmatrix}
a_0 & \dots& \rho^{(\nu_\kappa-1)d_0} a_{0}& \rho^{\nu_\kappa d_0} b_{0}&\dots &\rho^{(n-1)d_0} b_{0}\\
\vdots& &\vdots &\vdots& &\vdots\\
a_{n-1} & \dots& \rho^{(\nu_\kappa-1)d_{n-1}} a_{n-1}& \rho^{\nu_\kappa d_{n-1}} b_{n-1}&\dots &\rho^{(n-1)d_{n-1}} b_{n-1}\\
\end{pmatrix}
\end{equation}
(here and further $\rho = e^{\frac{2\pi i}{n}}$). Note that these matrices are non-degenerate by the Birkhoff regularity condition. 

Next, define matrices ${\cal A}$ and ${\cal B}$ with entries
$$
{\cal A}_{jk}=a_{j-1} (\rho^{k-1})^{d_{j-1}};\qquad {\cal B}_{jk}=b_{j-1} (\rho^{k-1})^{d_{j-1}} \quad \mbox{for} \quad j,k \in \{1,\dots,n\}. 
$$
Finally, we introduce matrices ${\cal P}^{[\kappa]}$ and ${\cal Q}^{[\kappa]}=(\bar{{\cal P}}^{[\kappa]})^T$, $\kappa=1,2$, by formulas
\begin{equation}\label{P}
{\cal P}^{[\kappa]}_{\alpha \beta} = \begin{cases}
\frac{1}{\rho^{\beta-\alpha}-1}, &\alpha>\nu_\kappa \geqslant \beta; \\
0, & \mbox{otherwise};
\end{cases}
\qquad
{\cal Q}^{[\kappa]}_{\alpha \beta} = \begin{cases}
\frac{1}{\rho^{\beta-\alpha}-1}, &\beta>\nu_\kappa \geqslant \alpha; \\
0, & \mbox{otherwise};
\end{cases}
\end{equation}
Note that if $n$ is even, then $\nu_1=\nu_2=\frac{n}{2}$, $\hat {\cal W}^{[1]}=\hat {\cal W}^{[2]}$, ${\cal P}^{[1]}={\cal P}^{[2]}$, and ${\cal Q}^{[1]}={\cal Q}^{[2]}$.\medskip 

Now we can formulate the main result of our paper.

\begin{theorem}\label{gentraceth}
Let $q\in L^1([a,b])$ be such that the functions
$$
\psi_a(x)=\frac{1}{x-a}\int\limits_a^x q(t)\, dt;\qquad
\psi_b(x)=\frac{1}{b-x}\int\limits_x^b q(t)\, dt
$$
have bounded variation at the points $a$ and $b$, respectively. Then for the eigenvalues $\lambda_N$ and $\mu_N$ of the operators ${\mathbb L}$ and ${\mathbb L}+{\mathbb Q}$ 
defined above the following is true:
\begin{multline}\label{mainform}
{\cal S}(q) \equiv \sum\limits_{N=1}^{\infty}\Big[\mu_N-\lambda_N-\frac{1}{b-a} \int\limits_{a}^{b}q(t)\, dt\Big] \\
= \frac{\psi_a(a+)}{2n}\cdot\sum\limits_{\kappa=1}^2\tr({\cal P}^{[\kappa]}(\hat {\cal W}^{[\kappa]})^{-1}{\cal A})
+\frac{\psi_b(b-)}{2n}\cdot\sum\limits_{\kappa=1}^2\tr({\cal Q}^{[\kappa]}(\hat {\cal W}^{[\kappa]})^{-1}{\cal B}).
\end{multline}
Moreover, for $\kappa=1$ and $\kappa=2$ the following formula is true:
\begin{equation}\label{sumcoeff}
\tr({\cal P}^{[\kappa]}(\hat {\cal W}^{[\kappa]})^{-1}{\cal A})+\tr({\cal Q}^{[\kappa]}(\hat {\cal W}^{[\kappa]})^{-1}{\cal B})=\sum\limits_{j=0}^{n-1}d_j-\frac{n(n-1)}{2}.
\end{equation}
\end{theorem}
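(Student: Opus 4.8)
The plan is to pass from the two eigenvalue sequences to the resolvents, to reduce everything to the free Green function $G_0$ by means of Theorem~\ref{Spint}, to localize the resulting integral to thin layers near the endpoints $a$ and $b$, and finally to dispose of the algebraic identity \eqref{sumcoeff} by a separate linear-algebra computation. I first write the partial sums through a contour integral,
\begin{equation*}
\sum_{|\lambda_N|<R^n}(\mu_N-\lambda_N)=\frac{1}{2\pi i}\oint_{|\lambda|=R^n}\lambda\,\tr\bigl[(\mathbb L-\lambda)^{-1}-(\mathbb L+\mathbb Q-\lambda)^{-1}\bigr]\,d\lambda ,
\end{equation*}
where the choice $R=R_l$ from Theorem~\ref{Spint} is exactly what guarantees that $\mathbb L$ and $\mathbb L+\mathbb Q$ have equally many eigenvalues inside the circle, so that the left-hand side is an honest partial sum of the series defining $\mathcal S(q)$. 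Expanding the resolvent difference in the Neumann series in powers of $\mathbb Q$, the term linear in $q$ equals $\tr[\mathbb Q(\mathbb L-\lambda)^{-2}]=\frac{d}{d\lambda}\int_a^b q(x)\,G(x,x,\lambda)\,dx$, and one integration by parts in $\lambda$ converts the leading contribution into $-\frac{1}{2\pi i}\oint_{|\lambda|=R^n}\int_a^b q(x)G(x,x,\lambda)\,dx\,d\lambda$. A preliminary step is to show that the terms of order $\ge2$ in $\mathbb Q$ contribute $o(1)$ as $R_l\to\infty$; this is where the first-order (i.e.\ linear in $q$) nature of the answer comes from, and it follows from the $L^1$-bound on $q$ together with the off-diagonal decay of the resolvent kernel along the contour.

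Next I invoke Theorem~\ref{Spint} to replace $G$ by $G_0$ on the diagonal, since
\begin{equation*}
\Bigl|\oint_{|\lambda|=R^n}\int_a^b q(x)\,(G-G_0)(x,x,\lambda)\,dx\,d\lambda\Bigr|\le\int_a^b|q(x)|\oint_{|\lambda|=R^n}|(G-G_0)(x,x,\lambda)|\,|d\lambda|\,dx\longrightarrow0 .
\end{equation*}
It then remains to analyse the explicit free kernel $G_0(x,x,\lambda)$. Writing $\lambda=\omega^n$ and using the Birkhoff fundamental solutions $e^{i\rho^{k}\omega x}$, the diagonal of $G_0$ splits into a translation-invariant bulk part, independent of the boundary conditions, and a boundary correction that decays exponentially on the scale $|\omega|^{-1}$ away from $a$ and $b$. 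Integrated against $q$ over the contour, the bulk part accounts for the subtracted term $\frac{1}{b-a}\int_a^b q$ through the leading Weyl asymptotics of the counting function, so that after the subtraction only the two endpoint layers survive.

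In the layer near $a$ I rescale $x=a+s/|\omega|$; the boundary profile of $G_0$ becomes a universal kernel built from $\hat{\mathcal W}^{[\kappa]}$, $\mathcal A$ and $\mathcal P^{[\kappa]}$, uniformly in $\arg\omega$. Because $q$ is only summable, its pointwise values inside the layer cannot be used directly; instead an Abel summation against $\int_a^{x}q$ replaces them by the averaged function $\psi_a$, and the hypothesis that $\psi_a$ has bounded variation at $a$ both extracts the limit $\psi_a(a+)$ and forces the remainder to zero. Performing the remaining integral over $|\omega|=R$ and letting $R\to\infty$ yields the coefficient $\frac{1}{2n}\sum_\kappa\tr(\mathcal P^{[\kappa]}(\hat{\mathcal W}^{[\kappa]})^{-1}\mathcal A)$; the endpoint $b$ is treated symmetrically and produces the $\mathcal Q^{[\kappa]},\mathcal B$ term, giving \eqref{mainform}. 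This low-regularity boundary-layer analysis is the main obstacle: one has to control the interaction of the oscillating, endpoint-localized profile with a merely $L^1$ weight, and it is exactly here that both the norm convergence furnished by Theorem~\ref{Spint} and the bounded-variation assumption are indispensable.

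Finally, \eqref{sumcoeff} is a self-contained identity. By cyclicity of the trace its left-hand side equals $\tr\bigl((\hat{\mathcal W}^{[\kappa]})^{-1}(\mathcal A\mathcal P^{[\kappa]}+\mathcal B\mathcal Q^{[\kappa]})\bigr)$. Writing $\Lambda=\mathrm{diag}(\rho^{d_0},\dots,\rho^{d_{n-1}})$, $\mathbf a=(a_0,\dots,a_{n-1})^{T}$ and $\mathbf b=(b_0,\dots,b_{n-1})^{T}$, the $k$-th columns of $\mathcal A$ and $\mathcal B$ are $\Lambda^{k-1}\mathbf a$ and $\Lambda^{k-1}\mathbf b$, whence $\hat{\mathcal W}^{[\kappa]}=\mathcal A E_1+\mathcal B E_2$ with $E_1,E_2$ the coordinate projections onto the first $\nu_\kappa$ and the last $n-\nu_\kappa$ indices. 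Using $E_2\mathcal P^{[\kappa]}=\mathcal P^{[\kappa]}$, $E_1\mathcal Q^{[\kappa]}=\mathcal Q^{[\kappa]}$ and the substitutions $\mathcal A=\hat{\mathcal W}^{[\kappa]}+(\mathcal A-\mathcal B)E_2$, $\mathcal B=\hat{\mathcal W}^{[\kappa]}+(\mathcal B-\mathcal A)E_1$, the trace collapses to $\tr(\mathcal P^{[\kappa]}+\mathcal Q^{[\kappa]})+\tr\bigl((\hat{\mathcal W}^{[\kappa]})^{-1}(\mathcal A-\mathcal B)(\mathcal P^{[\kappa]}-\mathcal Q^{[\kappa]})\bigr)$, and the first summand vanishes because $\mathcal P^{[\kappa]}$ and $\mathcal Q^{[\kappa]}$ have zero diagonal. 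For the second summand I introduce $T=(\hat{\mathcal W}^{[\kappa]})^{-1}\Lambda\,\hat{\mathcal W}^{[\kappa]}$, which shifts $e_j\mapsto e_{j+1}$ inside each of the chains $\{1,\dots,\nu_\kappa\}$ and $\{\nu_\kappa+1,\dots,n\}$ up to two overflow vectors re-expanded via the eigenvalues $\rho^{d_j}$ of $\Lambda$; this gives $(\hat{\mathcal W}^{[\kappa]})^{-1}\Lambda^{k-1}\mathbf a=T^{k-1}e_1$ and $(\hat{\mathcal W}^{[\kappa]})^{-1}\Lambda^{k-1}\mathbf b=T^{k-1-\nu_\kappa}e_{\nu_\kappa+1}$. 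Expanding in this basis reduces the trace to finite sums of the entries $\frac{1}{\rho^{\beta-\alpha}-1}$: the degree-carrying diagonal $\Lambda$ supplies the term $\sum_{j=0}^{n-1}d_j$, while the root-of-unity evaluation $\sum_{k=1}^{n-1}\frac{1}{\rho^{k}-1}=-\frac{n-1}{2}$ together with its partial-sum variants supplies $-\frac{n(n-1)}{2}$, which is the asserted value.
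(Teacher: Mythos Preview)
Your outline for \eqref{mainform} follows the paper's route closely: pass to the resolvent trace on a contour, kill the higher-order terms in $\mathbb Q$, use Theorem~\ref{Spint} to replace $G$ by $G_0$, and then read off the endpoint contributions from the explicit Birkhoff asymptotics. Two places deserve correction. First, the ``translation-invariant bulk part'' of $G_0(x,x,\lambda)$ is exactly the fundamental solution $K_0$ on the diagonal, and this is \emph{zero} (because $\sum_{k}\rho^{k}=0$); so it cannot account for the subtraction $\frac{1}{b-a}\int q$. What is true is that the $\alpha=\beta$ terms in the explicit formula for $\tilde G_0$ are $x$-independent on the diagonal, hence integrate against $q$ to a multiple of $\int q$; the paper handles this cleanly by assuming $\int_a^b q=0$ without loss of generality, after which these terms vanish and no Weyl-law matching is needed. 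Second, your disposal of the terms of order $\ge 2$ in $\mathbb Q$ by ``off-diagonal decay'' is only adequate for $n\ge 3$: the crude bound leaves an $O(1)$ remainder when $n=2$, and the paper has to invoke Theorem~\ref{Spint} once more (together with Lemma~\ref{asy} and dominated convergence) to close that case. With these two fixes the analytic part is essentially the paper's proof; the integration-by-parts step you describe (Abel summation against $\int_a^x q$, extracting $\psi_a(a+)$ from the bounded-variation hypothesis) matches the paper's treatment of Subcases~2 and~4 via \eqref{psi_ab} and Proposition~\ref{RL}.

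For \eqref{sumcoeff} your approach diverges from the paper's and has a genuine gap at the end. The paper does not manipulate $\mathcal P,\mathcal Q$ algebraically but expands them as Abel limits, $\mathcal P=-\lim_{r\to1-}\sum_{k\ge0}r^k\bar u_k^T v_k$ with explicit rows $u_k,v_k$; a short computation then gives the closed form $\tr(\mathcal P_{(k)}\hat{\mathcal W}^{-1}\mathcal A)+\tr(\mathcal Q_{(k)}\hat{\mathcal W}^{-1}\mathcal B)=-n+n\sum_j\sigma(k,d_j)$, and summing the geometric series in $r$ produces $\sum_j d_j-\frac{n(n-1)}{2}$ directly. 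Your reduction to $\tr\bigl((\hat{\mathcal W}^{[\kappa]})^{-1}(\mathcal A-\mathcal B)(\mathcal P^{[\kappa]}-\mathcal Q^{[\kappa]})\bigr)$ via $\hat{\mathcal W}=\mathcal A E_1+\mathcal B E_2$ is correct and pleasant, but the last paragraph is not a proof: introducing $T=\hat{\mathcal W}^{-1}\Lambda\hat{\mathcal W}$ and writing $(\hat{\mathcal W})^{-1}\Lambda^{k-1}\mathbf a=T^{k-1}e_1$ does not by itself reduce the trace to expressions that depend on the boundary data only through the $d_j$. The ``overflow vectors'' you mention carry all of the $a_j,b_j$ dependence, and you have not shown that it cancels; the assertion that ``$\Lambda$ supplies $\sum d_j$ and the root-of-unity identity supplies $-\frac{n(n-1)}{2}$'' is a heuristic, not a computation. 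Either carry this out explicitly (which looks at least as long as the paper's argument) or switch to the Abel-summation trick, which makes the $a_j,b_j$ independence transparent in one line.
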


\begin{remark}
Formula~$(\ref{mainform})$ for ${\mathbb L}={\mathbb L}_0$ and smooth $q$ was obtained in $\cite{Sh}$. However, formula~$(\ref{sumcoeff})$, as well as Theorem~\ref{sep} 
below, is new even in this case.
\end{remark}

For some classes of boundary conditions formula~(\ref{mainform}) can be considerably simplified.

\begin{theorem}\label{sep}
Let the assumptions of Theorem~\ref{gentraceth} be satisfied. 

{\bf 1}. Suppose that the boundary conditions \eqref{bc} are almost separated. Then

\noindent 1a$)$ for $n=2m$,
\begin{equation}\label{sep-even}
{\cal S}(q) = \frac{\psi_a(a+)}{2m}\left(\sum\limits_{j=0}^{m-1}d_j-\frac{m(2m-1)}{2}\right)+
\frac{\psi_b(b-)}{2m}\left(\sum\limits_{j=m}^{2m-1}d_j-\frac{m(2m-1)}{2}\right);
\end{equation} 

\noindent 1b$)$ for $n=2m+1$,
\begin{multline}\label{sep-odd}
{\cal S}(q) = \frac{\psi_a(a+)}{2m+1}\left(\sum\limits_{j=0}^{m-1}d_j+\frac{d_m}2-\frac{m(2m+1)}{2}\right)\\
+\frac{\psi_b(b-)}{2m+1}\left(\sum\limits_{j=m+1}^{2m}d_j+\frac{d_m}2-\frac{m(2m+1)}{2}\right).
\end{multline}

{\bf 2}. Suppose that the boundary conditions \eqref{bc} are quasi-periodic, i.e. $d_j=j$ and $b_j=a_j\vartheta$  $(\vartheta\ne0)$ for 
$j\in\{0,\dots,n-1\}$. Then 
\begin{equation}\label{periodic}
{\cal S}(q)  = 0.
\end{equation}
\end{theorem}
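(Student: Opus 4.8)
The plan is to derive everything from Theorem~\ref{gentraceth}: formula~\eqref{mainform} expresses $\mathcal S(q)$ through the four traces $\tr(\mathcal P^{[\kappa]}(\hat{\mathcal W}^{[\kappa]})^{-1}\mathcal A)$ and $\tr(\mathcal Q^{[\kappa]}(\hat{\mathcal W}^{[\kappa]})^{-1}\mathcal B)$, while identity~\eqref{sumcoeff} controls their sum. It remains only to evaluate each trace separately for the two special classes of boundary conditions, exploiting the extra algebraic structure that separation (respectively quasi-periodicity) imposes on the matrices~\eqref{hat} and~\eqref{P}.

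For the quasi-periodic case (Part~2) I would argue directly. Since $d_j=j$ and $b_j=\vartheta a_j$, write $F_{jk}=\rho^{(k-1)(j-1)}$ for the Fourier (Vandermonde) matrix, $D_a=\mathrm{diag}(a_0,\dots,a_{n-1})$, and $\Sigma^{[\kappa]}=\mathrm{diag}(1,\dots,1,\vartheta,\dots,\vartheta)$ with $\nu_\kappa$ ones; one checks that $\mathcal A=D_aF$, $\mathcal B=\vartheta\mathcal A$, and $\hat{\mathcal W}^{[\kappa]}=D_aF\Sigma^{[\kappa]}$. Hence $(\hat{\mathcal W}^{[\kappa]})^{-1}\mathcal A=(\Sigma^{[\kappa]})^{-1}$ is \emph{diagonal} and $(\hat{\mathcal W}^{[\kappa]})^{-1}\mathcal B=\vartheta(\Sigma^{[\kappa]})^{-1}$. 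Because the matrices $\mathcal P^{[\kappa]},\mathcal Q^{[\kappa]}$ in~\eqref{P} vanish on the diagonal, each of the four traces is the trace of a zero-diagonal matrix times a diagonal one, hence zero, giving~\eqref{periodic}.

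For the almost separated case (Part~1) the key is a block decoupling. When $n=2m$ the conditions $b_j=0$ ($j<m$) and $a_j=0$ ($j\ge m$) make $\hat{\mathcal W}^{[\kappa]}$ block diagonal (recall $\nu_1=\nu_2=m$), while $\mathcal A$ is supported on its first $m$ rows, $\mathcal B$ on its last $m$, and $\mathcal P^{[\kappa]}$ (resp.\ $\mathcal Q^{[\kappa]}$) occupies the lower-left (resp.\ upper-right) block. Thus $\tr(\mathcal P^{[\kappa]}(\hat{\mathcal W}^{[\kappa]})^{-1}\mathcal A)$ depends only on the $a$-side data. Factoring the $a$-block as $A=D_aV_a$ with a genuine Vandermonde $V_a$ in the nodes $\rho^{d_0},\dots,\rho^{d_{m-1}}$, a short computation collapses this trace to $\tr(PM)$, where $M=V_a^{-1}EV_a$ is the matrix of multiplication by $x^{\nu}$ modulo $\Phi(x)=\prod_j(x-\rho^{d_j})$ in the monomial basis (here $\nu=\nu_\kappa$ equals the block size) and $P$ is the relevant sub-block of~\eqref{P}. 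Everything then reduces to the single-side identity
\[
\tr(PM)=\sum_j d_j-\frac{\nu(n-1)}{2},
\]
the sum running over the degrees attached to that side. For even $n$ the two values of $\kappa$ coincide, and this identity turns~\eqref{mainform} into~\eqref{sep-even} at once. For odd $n=2m+1$ one has $\nu_1=m+1\ne\nu_2=m$, and the middle condition $j=m$ (with $a_mb_m\ne0$) is grouped with the $a$-side for $\kappa=1$ and with the $b$-side for $\kappa=2$, so $\hat{\mathcal W}^{[\kappa]}$ is only block \emph{triangular}; applying the identity to each $\kappa$ (with $\nu=\nu_\kappa$) and averaging as~\eqref{mainform} prescribes produces the half-weight $\tfrac{d_m}{2}$ and yields~\eqref{sep-odd}.

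The heart of the matter, and the step I expect to be hardest, is the single-side identity, which I would prove by residues. Expanding the entries of $P$ in the characters $t\mapsto\rho^{kt}$ gives the coefficients $\hat b_k=\tfrac{k}{n}\rho^{-k\nu}$ (valid on the index range actually used, the ambiguity by a multiple of the character $[t\equiv\nu]$ being immaterial). Splitting the resulting double sum according to whether $\rho^k$ is one of the nodes, the ``diagonal'' indices $k=d_j$ contribute exactly $\tfrac{\nu}{n}\sum_j d_j$, while the remaining indices give a sum over the roots $\rho^{d_j}$ of an explicit rational function, which I evaluate by balancing its residues against the poles at the points $\rho^k$ and at infinity; this supplies the complementary $\tfrac{n-\nu}{n}\sum_j d_j-\tfrac{\nu(n-1)}{2}$. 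The subtle point, and the genuine source of the linear term $\sum_j d_j$, is that the node indices $k\in\{d_j\}$ must be \emph{omitted} from the residue sum (they already vanish in the original expression), and their omission is exactly what promotes the constant into a term linear in the integers $d_j$. One cannot shortcut this by treating the nodes as free parameters, since then $\tr(PM)$ would be a symmetric rational function of them, whereas the answer is linear in the $d_j$: it is the constraint $\rho^{n}=1$ that forces the collapse. I would finish by checking that the two single-side values sum to the right-hand side of~\eqref{sumcoeff}, a reassuring consistency test (and, for odd $n$, the correct bookkeeping of the triangular blocks and their unequal sizes $\nu_1,\nu_2$ is where most of the care is needed).
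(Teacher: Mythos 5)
Your proposal is correct and reaches the theorem by a genuinely different route from the paper. Both arguments start from \eqref{mainform} and \eqref{sumcoeff}, but the paper never inverts $\hat{\cal W}$ in the almost separated case: it reuses the rank-one expansion ${\cal P}=-\lim_{r\to1-}\sum_k r^k\bar u_k^Tv_k$ from the proof of \eqref{sumcoeff}, observes that for almost separated conditions each row $v_k$ lies in $\Span\{e_{j+1}^T\hat{\cal W}\,:\,j<m\}$, so that by \eqref{eq6} the quantity $\tr({\cal P}_{(k)}\hat{\cal W}^{-1}{\cal A})$ equals $m$ or $-m$ according to whether $k\equiv d_j\pmod n$ for some $j$ on the $a$-side, and then Abel-sums these $\pm m$'s to get $\sum_{j<m}d_j-\tfrac{m(2m-1)}2$ --- which is exactly your ``single-side identity''. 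For odd $n$ the paper computes only the two traces accessible to this span argument, namely $\tr({\cal P}^{[2]}(\hat{\cal W}^{[2]})^{-1}{\cal A})$ and $\tr({\cal Q}^{[1]}(\hat{\cal W}^{[1]})^{-1}{\cal B})$, and extracts the remaining two from \eqref{sumcoeff}; that is where $\tfrac{d_m}2$ comes from. You instead evaluate all four traces via the block-triangular structure --- your observation that the middle row $j=m$ drops out of the relevant corner of $(\hat{\cal W}^{[\kappa]})^{-1}$ against the zero rows of ${\cal A}$ or ${\cal B}$ is correct, as is the resulting arithmetic leading to \eqref{sep-odd} --- and use \eqref{sumcoeff} only as a consistency check (note that the distinctness of the nodes $\rho^{d_j}$ on each side, needed for your Vandermonde factorization, follows from Birkhoff regularity of $\hat{\cal W}^{[\kappa]}$). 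Your quasi-periodic argument is cleaner than the paper's: the factorization $\hat{\cal W}^{[\kappa]}=D_aF\Sigma^{[\kappa]}$, which makes $(\hat{\cal W}^{[\kappa]})^{-1}{\cal A}=(\Sigma^{[\kappa]})^{-1}$ diagonal against the zero-diagonal matrices \eqref{P}, is a two-line version of the paper's explicit computation of the columns $\hat{\cal W}^{-1}e_{j+1}$ followed by term-by-term vanishing of $\tr({\cal P}_{(k)}\hat{\cal W}^{-1}{\cal A})$. The only place your write-up falls short of a proof is the residue evaluation of $\tr(PM)$, which you leave as a program; the identity you state is true (it reproduces all the paper's values, and the two side-values sum to \eqref{sumcoeff}), and your character expansion of $P$ with coefficients $k/n$ is precisely the paper's rank-one expansion in disguise, Abel-regularized rather than residue-balanced --- the split into node indices $k\equiv d_j$ versus the rest, which you rightly identify as the source of the linear term $\sum_j d_j$, is the same split $k\in I$ versus $k\notin I$ that the paper sums in \eqref{eq4}. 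So you could shortcut your hardest step by importing \eqref{eq6} instead of redoing the Fourier analysis from scratch.
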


The plan of our paper is as follows. In Section~\ref{2} we prove Theorem~\ref{Spint}, almost by direct computation. Here we also establish auxiliary estimates to be used in the next section. In Subsection~\ref{3.1} we deduce 
formula~(\ref{mainform}) from Theorem~\ref{Spint}. To do this, we improve the idea of~\cite{Sh}. Finally, in Subsection~\ref{3.2} we derive formulas~(\ref{sumcoeff})--(\ref{periodic}) using similar technique and tricks to those we used in~\cite{ZNS}.

\section{Proof of Theorem~\ref{Spint}}\label{2}

Throughout the paper we use the following notation. For $\lambda \in \mathbb{C}$ we define $z=\lambda^{\frac 1n}$, ($Arg(z)\in [0,2\pi/n)$). 
For a function $\Phi$ defined on $\mathbb{C}$, we write $\tilde\Phi(z)=\Phi(\lambda)$.

\subsection{Formula for the Green function}

We begin with finding the exact value of $G_0$ (recall that this is the Green function of~${\mathbb L}_0 - \lambda$). We introduce a fundamental solution for the 
operator generated by $(-i)^nD^{n}-\lambda$:
\begin{equation*}
\Tilde{K}_0(x,y,z)= 
\begin{cases}
0,& a \leqslant x<y\leqslant b\\
\frac{i}{nz^{n-1}}\sum\limits_{k=0}^{n-1} \rho^{k}e^{iz\rho^{k}(x-y)},& a\leqslant y\leqslant x \leqslant b.
\end{cases}
\end{equation*}
We search $\Tilde{G_0}$ as 

\begin{equation*}
\Tilde{G_0}(x,y,z)=\Tilde{K_0}(x,y,z)-\frac{i}{nz^{n-1}}\sum\limits_{k=0}^{n-1} c_k(y,z) e^{iz \rho^k (x-y)}.
\end{equation*}

We want to find functions $c_k$ such that the boundary relations~\eqref{bc} are fulfilled for~$\Tilde{G_0}$:
\begin{equation}\label{kakoy}
{\cal W}(z)\cdot 
\begin{pmatrix}
c_0(y,z)\\
\vdots \\
e^{-iz\rho^{j-1}y}c_{j-1}(y,z)\\
\vdots \\
e^{-iz\rho^{n-1}y}c_{n-1}(y,z)\\
\end{pmatrix}
=\sum\limits_{k=0}^{n-1} \rho^{k}e^{-izy\rho^k}\cdot
\begin{pmatrix}
e^{izb\rho^k}Q_0(iz\rho^k)\\
\vdots \\
e^{izb\rho^k}Q_j(iz\rho^k)\\
\vdots \\
e^{izb\rho^k}Q_{n-1}(iz\rho^k)\\
\end{pmatrix},
\end{equation}
where ${\cal W}(z)$ is a matrix containing the boundary values of the exponents:
\begin{equation*}
{\cal W}_{jk}(z)=e^{iz\rho^{k-1}a}P_{j-1}(iz\rho^{k-1})+e^{iz\rho^{k-1}b}Q_{j-1}(iz\rho^{k-1}),\quad j,k\in \{1,\dots,n\}.
\end{equation*}

We solve this linear equation using Cramer's rule:
\begin{equation*}
c_{\beta-1}(y,z)=\sum\limits_{\alpha=1}^{n} \rho^{\alpha-1} e^{izy(\rho^{\beta-1}-\rho^{\alpha-1})}\cdot \frac{\Delta_{\alpha,\beta}(z)}{\Delta(z)},
\end{equation*}
where $\Delta$ is the determinant of ${\cal W}$, $\Delta_{\alpha,\beta}$ is the determinant of a matrix that coincides with~${\cal W}$ but the column $\beta$ that is 
changed for the $\alpha$-th column from the sum on the right in (\ref{kakoy}). Note that this changed column contains only the second summand of the 
$\alpha$-th column of ${\cal W}$. 

Finally, the formula for the Green function is
\begin{equation}\label{GFF}
\Tilde{G_0}(x,y,z)=\Tilde{K_0}(x,y,z)-\frac{i}{nz^{n-1}} \sum\limits_{\alpha,\beta=1}^{n}\rho^{\alpha-1}e^{iz(\rho^{\beta-1}x-\rho^{\alpha-1}y)}\cdot 
\frac{\Delta_{\alpha,\beta}(z)}{\Delta(z)}.
\end{equation}

\subsection{Asymptotics of the Green function}\label{s1}

\begin{lem}\label{asy}
Set
$$
\Gamma_1 = \big\{w=e^{i\phi}\,:\,\phi\in\big(0, \frac{\pi}{n}\big)\big\};\qquad 
\Gamma_2=\big\{w=e^{i\phi}\,:\,\phi\in\big(\frac{\pi}{n},\frac{2\pi}{n}\big)\big\}.
$$ 
Then for every sequence $R_l \rightarrow +\infty$ such that $R_l$ is separated from $|\lambda^0_N|^{\frac{1}{n}}$ and for all $j \in \{0,\dots,n-1\}$ 
the function 
$$
R_l^{n-1-j}\cdot|(\Tilde{G_0})^{(j)}_x(x,y,R_l w)|
$$ 
is uniformly bounded on $[a,b]^2\times (\Gamma_1\cup \Gamma_2)$. Next, for every $x\in [a,b]$ one has  
$$
R_l^{n-1} \cdot \Tilde{G_0}(x,y, R_lw) \rightarrow 0,\qquad R_l\rightarrow +\infty
$$
for a.e. $y \in [a,b]$ and a.e. $w \in \Gamma_1\cup \Gamma_2$. Moreover, the convergence is uniform on $C\times J$ for 
arbitrary compact set $C\subset[a,b]^2$ separated from the corners and the diagonal $\{x=y\}$ and for 
arbitrary compact set $J \subset \Gamma_1\cup \Gamma_2$.
\end{lem}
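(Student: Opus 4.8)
The plan is to read off everything from the explicit formula \eqref{GFF} and to reduce the lemma to bookkeeping of exponential factors. Writing $z=R_l w$ with $w=e^{i\phi}$, the modulus of each exponential is controlled by $\mathrm{Re}(iz\rho^{k})=-R_l\,s_k(\phi)$, where $s_k(\phi):=\sin(\phi+\tfrac{2\pi k}{n})$. First I would check that the arcs $\Gamma_1$ and $\Gamma_2$ are chosen precisely so that every $s_k(\phi)$ is nonzero and the signs of all $s_k$ are constant on each $\Gamma_\kappa$; this splits $\{0,\dots,n-1\}$ into the ``$a$-dominant'' indices ($s_k>0$, for which $e^{iz\rho^{k}a}$ beats $e^{iz\rho^{k}b}$ in size on $[a,b]$) and the ``$b$-dominant'' indices ($s_k<0$). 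The count of each type and the way they interlace is exactly the combinatorics encoded in $\hat{\mathcal W}^{[\kappa]}$ and in the cut-off $\nu_\kappa$.

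Next I would determine the asymptotics of $\Delta(z)=\det\mathcal W(z)$ and of the cofactor determinants $\Delta_{\alpha,\beta}(z)$. Pulling from each column its dominant exponential and dominant monomial $(iz\rho^{k-1})^{d_{j-1}}$, the leading coefficient matrix becomes, up to explicit powers of $z$ and $\rho$, exactly $\hat{\mathcal W}^{[\kappa]}$, which is nondegenerate by Birkhoff regularity. Hence $\Delta(z)=E(z)\big(\det\hat{\mathcal W}^{[\kappa]}+o(1)\big)$, with $E(z)$ the product of the dominant factors, and the hypothesis that $R_l$ stays away from $|\lambda^0_N|^{1/n}$ upgrades this to a uniform two-sided bound $|\Delta(R_l w)|\asymp|E(R_l w)|$ over the full arcs (the zeros of $\Delta$ are exactly the $z$ with $z^n=\lambda^0_N$). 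The same factorization applies to $\Delta_{\alpha,\beta}$, \emph{but} here the inserted column carries the direction $\rho^{\alpha-1}$ with a $b$-exponential, coinciding with the direction of the still-present column $\alpha$: when $\alpha$ is $b$-dominant the naive leading term cancels, and the genuine leading term is obtained by using the \emph{subdominant} $a$-exponential in column $\alpha$. Tracking this is what rules out spurious growth.

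With these asymptotics I would assemble the ratio $\frac{\Delta_{\alpha,\beta}(z)}{\Delta(z)}\,e^{iz(\rho^{\beta-1}x-\rho^{\alpha-1}y)}$. The dominant exponentials telescope, leaving a bounded prefactor times $e^{iz\rho^{\beta-1}(x-c_\beta)}\,e^{iz\rho^{\alpha-1}(e_\alpha-y)}$, where $c_\beta,e_\alpha\in\{a,b\}$ are the endpoints selected by the signs of $s_{\beta-1}$ and $s_{\alpha-1}$. By the very choice of these endpoints both exponents have modulus $\le1$ on $[a,b]^2$ and decay like $e^{-R_l\delta}$ once $x$ is away from $c_\beta$ and $y$ away from $e_\alpha$. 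Multiplying by the factor $R_l^{n-1-j}z^{j}/z^{n-1}=w^{j-n+1}$ coming from $j$ differentiations in $x$ and from the $z^{-(n-1)}$ in \eqref{GFF} changes nothing essential, so the correction part of $R_l^{n-1-j}(\tilde G_0)^{(j)}_x$ is uniformly bounded on $[a,b]^2\times(\Gamma_1\cup\Gamma_2)$. It remains to treat the $\tilde K_0$ term (present only for $x\ge y$): its $b$-dominant directions ($s_k<0$) grow like $e^{R_l|s_k|(x-y)}$, and these are matched exactly by the diagonal correction terms $\alpha=\beta$, for which $\Delta_{\beta\beta}/\Delta\to1$; the difference carries the subdominant factor $1-\Delta_{\beta\beta}/\Delta$ and is again bounded. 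This yields the first assertion.

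For the convergence statements I would reuse the telescoped expression. On a compact $C\subset[a,b]^2$ separated from the four corners and the diagonal, and for $w$ in a compact $J\Subset\Gamma_1\cup\Gamma_2$ (where $|s_k|\ge\delta>0$), every surviving exponent is $\le-R_l\delta'$, so $R_l^{n-1}\tilde G_0\to0$ uniformly on $C\times J$. For the pointwise statement, fix $x$: for every $y\in(a,b)$ with $y\ne x$ the $y$-exponents $e^{iz\rho^{\alpha-1}(e_\alpha-y)}$ strictly decay while the $x$-exponents stay $\le1$, and the $\tilde K_0$ remainder decays off the diagonal, so each term tends to $0$; since the excluded $y$ form a null set, this gives convergence for a.e. $y$ and, as $|s_k|>0$ throughout the open sectors, for a.e. $w$. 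The main obstacle is twofold: obtaining the \emph{uniform} lower bound on $|\Delta(R_l w)|$ from the separation hypothesis, especially as $w$ approaches the Stokes rays bounding the sectors (where the subdominant exponential becomes comparable to the dominant one and eigenvalues cluster); and, most delicate, carrying the cancellation of the growing $\tilde K_0$-directions against the correction beyond leading order, so that the residual stays controlled uniformly up to, but not on, the diagonal.
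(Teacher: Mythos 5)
Your proposal is correct and follows essentially the same route as the paper: the explicit formula \eqref{GFF}, factoring the dominant exponential and the power $(iz\rho^{k-1})^{d_{j-1}}$ from each column so that the leading coefficient matrix is $\hat{\mathcal W}^{[\kappa]}$ (nondegenerate by Birkhoff regularity, with $\DD$ separated from zero thanks to the choice of $R_l$), and the sign pattern of $\mathrm{Re}(iw\rho^{k-1})$ on each arc, which is exactly the paper's $\nu(w)$. The two delicate points you single out are precisely the ones the paper resolves the same way: the near-cancellation for $b$-dominant $\alpha$ is handled by subtracting column $\alpha$ from column $\beta$ in $\Delta_{\alpha,\beta}$ (subcases 3.3--3.4, and via $\Delta-\Delta_{\alpha,\alpha}$ in Case 2 to tame the growing $\Tilde K_0$ directions), while the uniform lower bound on $|\Delta(R_lw)|$ is obtained, as you indicate, from the separation of $R_l$ from $|\lambda^0_N|^{1/n}$, since the $(\lambda^0_N)^{1/n}$ are the zeros of $\Delta$.
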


In what follows, when we write some limit over $R$ tending to $+\infty$ we mean the limit over this sequence $R_l$.

We turn to the proof of Lemma~\ref{asy}. The first part of this lemma (uniform estimates for $\Tilde{G}_0$ and its derivatives) can be easily extracted 
from \cite[\S4]{Nay}. However, to prove convergence to zero, one has to do more work. The proof is nothing but a treatment of formula~\eqref{GFF}, we 
evaluate each summand on its own. However, different summands are estimated in a different way, so we have to deal with several cases.

Note that for $x<y$
\begin{equation}\label{GFF1}
R^{n-1} \cdot \Tilde{G_0}(x,y, Rw)=-\frac{i}{nw^{n-1}} \sum\limits_{\alpha,\beta=1}^{n}\rho^{\alpha-1}e^{iRw(\rho^{\beta-1}x-\rho^{\alpha-1}y)}\cdot 
\frac{\Delta_{\alpha,\beta}(Rw)}{\Delta(Rw)},
\end{equation}
while for $x\ge y$
\begin{multline}\label{GFF2}
R^{n-1}\Tilde{G}_0(x,y,Rw)=\frac{i}{nw^{n-1}}\sum\limits_{\alpha=1}^n \rho^{\alpha-1}e^{iRw\rho^{\alpha-1}(x-y)}
\left(1- \frac{\Delta_{\alpha,\alpha}(Rw)}{\Delta(Rw)} \right)\\ 
-\frac{i}{nw^{n-1}} \sum\limits_{\alpha \ne \beta}\rho^{\alpha-1}e^{iRw(\rho^{\beta-1}x-\rho^{\alpha-1}y)}\cdot \frac{\Delta_{\alpha,\beta}(Rw)}{\Delta(Rw)}.
\end{multline}
We begin with asymptotics of elements of the matrix ${\cal W}$. If $Re(iw\rho^{k-1})>0$, then
\begin{multline*}
{\cal W}_{jk}(Rw)=e^{iRw\rho^{k-1}b}(iRw\rho^{k-1})^{d_{j-1}}\\
\times\left(b_{j-1}+O\Big(\frac{1}{R}\Big)+ 
e^{iRw\rho^{k-1}(a-b)}\Big(a_{j-1}+O\Big(\frac{1}{R}\Big)\Big)\right)\\
=e^{iRw\rho^{k-1}b}(iRw\rho^{k-1})^{d_{j-1}}\cdot(b_{j-1}+o(1)), \qquad R \to +\infty.
\end{multline*}
If $Re(iw\rho^{k-1})<0$, then
\begin{multline*}
{\cal W}_{jk}(Rw)=e^{iRw\rho^{k-1}a}(iRw\rho^{k-1})^{d_{j-1}}\\
\times\left(a_{j-1}+O\Big(\frac{1}{R}\Big)+ 
e^{iRw\rho^{k-1}(b-a)}\Big(b_{j-1}+O\Big(\frac{1}{R}\Big)\Big)\right)\\
=e^{iRw\rho^{k-1}a}(iRw\rho^{k-1})^{d_{j-1}}\cdot(a_{j-1}+o(1)), \qquad R \to +\infty.
\end{multline*}
We note that the ``$O$'' estimates are uniform on $\Gamma_1 \cup \Gamma_2$ and the ``$o$'' estimates are uniform on $J$.

We come to the moment where the cases of odd and even $n$ differ. Consider the function 
$$
\nu(w)=
\begin{cases}
\nu_1=\left[\frac{n+1}{2}\right],& w \in \Gamma_1;\\
\nu_2=\left[\frac{n}{2}\right],& w \in \Gamma_2.\\
\end{cases}
$$
Note that if $n$ is even, then $\nu(w)=\frac{n}{2}$ for $w \in \Gamma_1\cup \Gamma_2$. If $n$ is odd, then $\nu(w)=\frac{n+1}{2}$ for $w \in \Gamma_1$ 
and $\nu(w)=\frac{n-1}{2}$ for $w \in \Gamma_2$. This number $\nu(w)$ is characterized by the following property: if $k\leqslant \nu(w)$, then $Re(iw\rho^{k-1})<0$, 
if $\nu(w)<k\leqslant n$, then $Re(iw\rho^{k-1})>0$. Thus, for $w \in \Gamma_1\cup \Gamma_2$ the inequality $Re(iw\rho^{k-1})<0$ holds for 
$k\in \{1,\dots,\nu(w)\}$. 

Next, we write the asymptotics of determinant $\Delta$. 
We introduce the function 
\begin{equation}\label{ff}
\ff(Rw)=\sum\limits_{k=1}^{\nu(w)}|e^{iRw\rho^{k-1}(b-a)}|+\sum\limits_{k=\nu(w)+1}^{n}|e^{-iRw\rho^{k-1}(b-a)}|, \qquad w\in \Gamma_1 \cup \Gamma_2.
\end{equation}
Clearly, $\ff(Rw)\to 0$ uniformly on compact subsets of $\Gamma_1\cup \Gamma_2$ as $R \to +\infty$.

We factorize common factors from each column and row of $\Delta$ and get (see \cite[\S4]{Nay})
$$
\Delta(Rw)=e^{iaRw\sum\limits_{k=1}^\nu \rho^{k-1}+ibRw\sum\limits_{k=\nu+1}^{n} \rho^{k-1}}\!\cdot(iRw)^{\sum\limits_{j=0}^{n-1} d_j}\!\cdot\DD(Rw),
$$
where 
$$
\DD(Rw)=\hat\Delta+O\Big(\frac{1}{R}\Big)+O\Big(\ff(Rw)\Big)=\hat\Delta+o(1),\quad R \to +\infty,
$$
while $\nu=\nu_\kappa$ and $\hat\Delta=\hat\Delta^{[\kappa]}\equiv\det\hat {\cal W}^{[\kappa]}$ for $w \in \Gamma_\kappa$. Here the ``$O$'' estimates are uniform for 
$w\in \Gamma_1 \cup \Gamma_2$ and the ``$o$'' is uniform for $w\in J$. Recall that the determinants $\hat\Delta$ are non-zero by the 
Birkhoff regularity condition. Moreover, since $(\lambda^0_N)^{\frac 1n}$ are zeros of $\Delta(z)$, the function $\DD(Rw)$ is separated from zero for 
$R=R_l$ and $w \in \Gamma_1\cup \Gamma_2$ by our choice of the sequence $R_l$.\medskip 

Now we can write the asymptotics of terms in (\ref{GFF1}) and (\ref{GFF2}). 

\paragraph{Case $1$: $\alpha = \beta\leqslant \nu$.}
We have, as $R \to \infty$,
$$
\Delta_{\alpha,\alpha}(Rw)= e^{iRw(b\rho^{\alpha-1}-a\rho^{\alpha-1})} e^{iaRw\sum\limits_{k=1}^\nu \rho^{k-1}+ibRw\sum\limits_{k=\nu+1}^{n}\rho^{k-1}}
\!\!\cdot(iRw)^{\sum\limits_{j=0}^{n-1} d_j}\!\cdot (\hat\Delta_{\alpha,\alpha}+o(1)).
$$
Here $\hat\Delta_{\alpha,\alpha}$ is the determinant of a matrix that differs from $\hat {\cal W}$ only in the $\alpha$-th column. Namely, there are numbers 
$\rho^{(\alpha-1)d_j} b_{j}$ instead of $\rho^{(\alpha-1)d_j} a_{j}$. Thus, we obtain
$$
\frac{\Delta_{\alpha, \alpha}(Rw)}{\Delta(Rw)}=e^{iRw(b\rho^{\alpha-1}-a\rho^{\alpha-1})} \Big(\frac{\hat\Delta_{\alpha,\alpha}}{\hat\Delta}+o(1)\Big), 
\qquad R \to \infty.
$$
For $x<y$ this implies
\begin{equation*}
e^{iRw(\rho^{\alpha-1}x-\rho^{\alpha-1}y)}\,\frac{\Delta_{\alpha,\alpha}(Rw)}{\Delta(Rw)} = O(e^{iRw\rho^{\alpha-1}(b-a+x-y)})= o(1), \qquad R \to \infty,
\end{equation*}
if $(x,y)\ne (a,b)$. For $x\ge y$ we obtain, as $R \to +\infty$,
$$
e^{iRw\rho^{\alpha-1}(x-y)}\left( 1- \frac{\Delta_{\alpha,\alpha}(Rw)}{\Delta(Rw)} \right)=e^{iRw\rho^{\alpha-1}(x-y)}+
O(e^{iRw\rho^{\alpha - 1}(b - a +x-y)})=o(1),
$$
if $x\ne y$. Here the ``$o$'' estimates are uniform for $(x,y,w) \in C\times J$.

\paragraph{Case $2$: $\alpha = \beta > \nu$.}
We consider $\Delta-\Delta_{\alpha,\alpha}$ and use linearity of the determinant with respect to the $\alpha$-th column to get 
$e^{iRw\rho^{\alpha-1}a}P_{j-1}(iRw\rho^{\alpha-1})$ in the $\alpha$-th column. Using the same asymptotic formulas, we obtain
\begin{equation*}
e^{iRw\rho^{\alpha-1}(x-y)}\,\frac{\Delta(Rw) - \Delta_{\alpha,\alpha}(Rw)}{\Delta(Rw)} =  O(e^{iRw\rho^{\alpha-1}(a-b +x-y)}), \quad R \to \infty.
\end{equation*}
For $x \geq y$ this implies
$$
e^{iRw\rho^{\alpha-1}(x-y)}\left( 1- \frac{\Delta_{\alpha,\alpha}(Rw)}{\Delta(Rw)} \right)=O(e^{iRw\rho^{\alpha-1}(a-b +x-y)})=o(1), \quad R \to +\infty,
$$
if $(x,y)\ne (b,a)$. For $x<y$ we obtain, as $R \to +\infty$,
\begin{equation*}
e^{iRw(\rho^{\alpha-1}x-\rho^{\alpha-1}y)}\,\frac{\Delta_{\alpha,\alpha}(Rw)}{\Delta(Rw)} = - e^{iRw\rho^{\alpha-1}(x-y)} +
O(e^{iRw\rho^{\alpha-1}(b-a+x-y)})= o(1).
\end{equation*}
Here the ``$o$'' estimates are uniform for $(x,y,w) \in C\times J$.

\paragraph{Case $3$: $\alpha \ne \beta$.}
In this case we either directly use the same asymptotic formulas (but with the ``$O$'' estimates) or subtract the $\alpha$-th column from the $\beta$-th one 
in $\Delta_{\alpha,\beta}$ to make the exponent in the $\beta$-th column smaller (our choice of the procedure depends on the sign of 
$Re(iaw\rho^{\alpha-1})$).

\subparagraph{Subcase $3.1$: $\alpha,\beta \leqslant \nu$.}
In this case $Re(iaw\rho^{\alpha-1})<0$, so we directly use asymptotic formulas and get
\begin{multline}\label{<<}
\frac{\Delta_{\alpha,\beta}(Rw)}{\Delta(Rw)} = e^{iRw(b\rho^{\alpha - 1} - a\rho^{\beta - 1})}
\bigg(\frac{\hat\Delta_{\alpha,\beta}+O(\frac{1}{R})+O(\ff(Rw))}{\DD(Rw)}\bigg)\\
=e^{iRw(b\rho^{\alpha-1} - a\rho^{\beta-1})}\bigg(\frac{\hat\Delta_{\alpha,\beta}}{\hat\Delta}
+O\Big(\frac{1}{R}\Big)+O\Big(\ff(Rw)\Big)\bigg), \quad R \to +\infty.
\end{multline}
Here $\hat\Delta_{\alpha,\beta}$ is the determinant of a matrix that resembles $\hat {\cal W}$, the only difference is that there are numbers $\rho^{(\alpha-1)d_j} b_{j}$ instead of 
$\rho^{(\beta-1)d_j} a_{j}$ in the $\beta$-th column. The last equation in (\ref{<<}) holds because the denominator $\DD$ is separated from zero. 
The ``$O$'' estimates are uniform for $w \in \Gamma_1 \cup \Gamma_2$.

\subparagraph{Subcase $3.2$: $\alpha \leqslant \nu < \beta$.}
In this case $Re(iaw\rho^{\alpha-1})<0$ again, so we directly use asymptotic formulas and get
\begin{equation}\label{<>}
\frac{\Delta_{\alpha,\beta}(Rw)}{\Delta(Rw)} = e^{iRw(b\rho^{\alpha - 1} - b\rho^{\beta - 1})}\bigg(\frac{\hat\Delta_{\alpha,\beta}}{\hat\Delta}
+O\Big(\frac{1}{R}\Big)+O\Big(\ff(Rw)\Big)\bigg), \quad R \to +\infty.
\end{equation}
Here $\hat\Delta_{\alpha,\beta}$ is the determinant of a matrix that resembles $\hat {\cal W}$, the only difference is that there are numbers $\rho^{(\alpha-1)d_j} b_{j}$ instead of 
$\rho^{(\beta-1)d_j} b_{j}$ in the $\beta$-th column. The ``$O$'' estimates are uniform for $w \in \Gamma_1 \cup \Gamma_2$.

\subparagraph{Subcase $3.3$: $\alpha$,$\beta > \nu$.}
In this case $Re(iaw\rho^{\alpha-1})>0$, so we subtract the $\alpha$-th column from the $\beta$-th one in $\Delta_{\alpha,\beta}$. Arguing the same way 
as before, one gets
\begin{equation}\label{>>}
\frac{\Delta_{\alpha,\beta}(Rw)}{\Delta(Rw)} = e^{iRw(a\rho^{\alpha - 1} - b\rho^{\beta - 1})}\bigg(-\frac{\hat\Delta_{\alpha,\beta}}{\hat\Delta}
+O\Big(\frac{1}{R}\Big)+O\Big(\ff(Rw)\Big)\bigg), \quad R \to +\infty.
\end{equation}
Here $\hat\Delta_{\alpha,\beta}$ is the determinant of a matrix that resembles $\hat {\cal W}$, the only difference is that there are numbers $\rho^{(\alpha-1)d_j} a_{j}$ instead of 
$\rho^{(\beta-1)d_j} b_{j}$ in the $\beta$-th column. The ``$O$'' estimates are uniform for $w \in \Gamma_1 \cup \Gamma_2$.

\subparagraph{Subcase $3.4$: $\alpha > \nu \geqslant \beta$.}
In this case $Re(iaw\rho^{\alpha-1})>0$ again, so we subtract the $\alpha$-th column from the $\beta$-th one in $\Delta_{\alpha,\beta}$. Arguing the same 
way as before, one gets
\begin{equation}\label{><}
\frac{\Delta_{\alpha,\beta}(Rw)}{\Delta(Rw)} = e^{iRw(a\rho^{\alpha - 1} - a\rho^{\beta - 1})}\bigg(-\frac{\hat\Delta_{\alpha,\beta}}{\hat\Delta}
+O\Big(\frac{1}{R}\Big)+O\Big(\ff(Rw)\Big)\bigg), \quad R \to +\infty.
\end{equation}
Here $\hat\Delta_{\alpha,\beta}$ is the determinant of a matrix that resembles $\hat {\cal W}$, the only difference is that there are numbers $\rho^{(\alpha-1)d_j} a_{j}$ instead of 
$\rho^{(\beta-1)d_j} a_{j}$ in the $\beta$-th column. The ``$O$'' estimates are uniform for $w \in \Gamma_1 \cup \Gamma_2$.\medskip

In all subcases we obtain
\begin{equation*}
e^{iRw(\rho^{\beta-1}x-\rho^{\alpha-1}y)}\,\frac{\Delta_{\alpha,\beta}(Rw)}{\Delta(Rw)}=o(1), \quad R \to \infty,
\end{equation*}
if $(x,y) \notin \{(a,a),(a,b),(b,a),(b,b)\}$. Here the ``$o$'' estimates are uniform in $(x,y,w) \in C\times J$.\medskip 

Summing up the estimates of cases 1-3, we complete the proof of Lemma~\ref{asy}.\hfill$\square$

\begin{remark}
We note that for odd $n$ the numbers $\hat\Delta$ and $\hat\Delta_{\alpha,\beta}$ defined in the proof of Lemma~\ref{asy} depend on $w$ since 
the number $\nu$ depends on $w$. But these numbers are constants on $\Gamma_1$ and $\Gamma_2$. For even $n$ these numbers are constants on 
$\Gamma_1\cup \Gamma_2$.
\end{remark}

\subsection{Truncation of operator}

In this subsection we prove Theorem~\ref{Spint}.
We write down an identity
\begin{equation}
\label{K-G}
(G - G_0)(x,y,\lambda)=-\int\limits_a^b G_0(x,t,\lambda) \sum\limits_{k=0}^{n-2} p_k(t) G^{(k)}_t(t,y,\lambda)\, dt,
\end{equation}
where $p_k$ are the lower order coefficients of ${\mathbb L}$. It is a reformulation of Hilbert identity for resolvents, 
$$
\frac{1}{{\mathbb L} - \lambda} - \frac{1}{{\mathbb L}_0 - \lambda} = \frac{1}{{\mathbb L} - \lambda}\,({\mathbb L}_0 - {\mathbb L})\,\frac{1}{{\mathbb L}_0 - \lambda},
$$
in terms of Green functions. 

We differentiate equation \eqref{K-G} $j$ times with respect to $x$:
\begin{equation}
\label{k'}
G^{(j)}_x(x,y,\lambda)=(G_0)^{(j)}_x(x,y,\lambda)-\int\limits_a^b (G_0)^{(j)}_x(x,t,\lambda) \sum\limits_{k=0}^{n-2} p_k(t) G^{(k)}_t(t,y,\lambda)\,dt.
\end{equation}
Next, we multiply the expressions for $G^{(j)}_x$ by $p_j(x)$, sum them, and achieve 
\begin{multline*}
\sum\limits_{j=0}^{n-2} p_j(x) G^{(j)}_x(x,y,\lambda)=\sum_{j=0}^{n-2} p_j(x) (G_0^{(j)})_x(x,y,\lambda)\\
-\sum_{j=0}^{n-2} p_j(x) \int\limits_a^b (G_0)^{(j)}_x(x,t,\lambda) \sum\limits_{k=0}^{n-2} p_k(t) G^{(k)}_t(t,y,\lambda)\, dt.
\end{multline*}
Now let $|\lambda|^{\frac 1n}=R=R_l$ be taken from Lemma~\ref{asy}. 
Then the derivatives of $G_0$ can be estimated with the help of the first part of Lemma~\ref{asy}, and we obtain
$$
\bigg\|\sum\limits_{j=0}^{n-2} p_j(\cdot) G^{(j)}(\cdot,y,\lambda)\bigg\|_1\leqslant \frac{C}{|\lambda|^{\frac{1}{n}}}
+\frac{C}{|\lambda|^{\frac{1}{n}}}\cdot\bigg\|\sum\limits_{j=0}^{n-2} p_j(\cdot) G^{(j)}(\cdot,y,\lambda)\bigg\|_1.
$$
This implies
$$
\bigg\|\sum\limits_{j=0}^{n-2} p_j(\cdot) G^{(j)}(\cdot,y,\lambda)\bigg\|_1 \leqslant \frac{C}{|\lambda|^{\frac{1}{n}}}.
$$
We substitute this inequality into \eqref{k'} and get a pointwise estimate
\begin{equation}\label{grfest}
|G^{(j)}_x(x,y,\lambda)|\leqslant \frac{C}{|\lambda|^{\frac{n-1-j}{n}}}+\frac{C}{|\lambda|^{\frac{n-j}{n}}}\leqslant \frac{C}{|\lambda|^{\frac{n-1-j}{n}}}.
\end{equation}

Now we are ready to estimate the difference of the spectral functions of ${\mathbb L}$ and ${\mathbb L}_0$. Note that by formula~(\ref{K-G})
\begin{multline*}
\int\limits_{|\lambda|=R^{n}} |(G - G_0)(x,y,\lambda)|\, |d\lambda|\\
\leqslant
\int\limits_{\Gamma_1\cup \Gamma_2} \int\limits_a^b R^{n} |\Tilde{G_0}(x,t,Rw)|\cdot 
\Big|\sum\limits_{j=0}^{n-2} p_j(t) \Tilde{G}^{(j)}_t(t,y,Rw)\Big|\,dt |dw|.
\end{multline*}

By formula~(\ref{grfest}), the integrand has a majorant $M(t,w)= const\sum\limits_{j=0}^{n-2} |p_j(t)|$. We fix an $\eps>0$ and choose $\delta>0$ such 
that the integral of $M$ over the set of measure not more than $\delta$ is less than $\eps$. 

Next, we choose a compact set $C \subset [a,b]^2$, separated from the diagonal and the corners, such that the set $C_x=\{t\in [a,b]: (x,t) \notin C \}$ 
has measure not more than $\frac{\delta n}{2\pi}$ uniformly in $x$. Also we choose a compact set $J\subset\Gamma_1\cup \Gamma_2$ such that the measure of 
$\Gamma_1 \cup \Gamma_2 \setminus J$ is not more than $\frac{\delta}{b-a}$.

The integral over the set $([a,b]\setminus C_x)\times J$ tends to zero as $R\to\infty$ uniformly in $(x,y)\in [a,b]^2$, 
since by Lemma~\ref{asy} and formula~(\ref{grfest}) the integrand tends to zero uniformly on this set. The integral over the remaining set does not exceed 
$2\eps$. Thus, for $R$ large enough, the whole integral is not bigger than $3\eps$ for all $(x,y)\in [a,b]^2$, and the theorem follows.\hfill$\square$

\section{Proof of Theorems~\ref{gentraceth} and \ref{sep}}\label{3}

\subsection{Reduction to linear algebra}\label{3.1}
First of all, we can assume $\int\limits_a^b q(x) dx =0$ because adding a constant to $q$ only shifts the spectrum $\mu_N$, but does not change 
${\cal S}(q)$. We begin with a formula
\begin{equation}\label{inttrace}
\sum\limits_{|\lambda_N| < R^n}\lambda_N = -\frac{1}{2\pi i}\int\limits_{|\lambda| = R^n}\lambda\,\Sp\, \frac{1}{{\mathbb L} - \lambda}\, d\lambda,
\end{equation}
where the trace on the right is an integral operator trace 
$$
\Sp\,\frac{1}{{\mathbb L} - \lambda} = \int\limits_{a}^b G(x,x,\lambda)\, dx.
$$ 
Indeed, by the Lidskii theorem \cite{Lid},
\begin{equation*}
\sum_N\frac{1}{\lambda_N - \lambda} = \Sp\, \frac{1}{{\mathbb L} - \lambda}
\end{equation*}
for all $\lambda$ not in the spectrum of ${\mathbb L}$ (we use the fact that the resolvent $\frac{1}{{\mathbb L} - \lambda}$ is in the trace class, because $|\lambda_N|$ grows as 
$N^n$). We multiply this equation by $\lambda$, integrate over the circle $|\lambda| = R^n$, use the residue theorem and arrive at \eqref{inttrace}. 

Now we can express ${\cal S}(q)$ using the Hilbert identity for resolvents:
\begin{multline}\label{bigformula}
{\cal S}(q) = \frac{1}{2\pi i}\lim_{R \to \infty}\int\limits_{|\lambda| = R^n}
\lambda\,\Sp \, \Big(\frac{1}{{\mathbb L} - \lambda} - \frac{1}{{\mathbb L}+{\mathbb Q} - \lambda}\Big)\, d\lambda \\
= \frac{1}{2\pi i}\lim_{R \to \infty} \int\limits_{|\lambda| = R^n}\lambda \,\Sp \,\Big(\frac{1}{{\mathbb L}-\lambda}\,{\mathbb Q}\, \frac{1}{{\mathbb L}+{\mathbb Q}-\lambda}\Big)\,d\lambda\\
=-\frac{1}{2\pi i}\lim_{R \to \infty}\frac{1}{2}\int\limits_{|\lambda| = R^n}\lambda\,\Sp \,\Big(\Big(\frac{1}{{\mathbb L}-\lambda}
-\frac{1}{{\mathbb L}+{\mathbb Q}-\lambda}\Big)\,{\mathbb Q}\,\Big(\frac{1}{{\mathbb L}-\lambda}-\frac{1}{{\mathbb L}+{\mathbb Q}-\lambda}\Big)\Big)\, d\lambda\\
+ \frac{1}{2\pi i}\lim_{R \to \infty}\frac{1}{2}\int\limits_{|\lambda| = R^n}\lambda\,\Sp \,\Big(\frac{1}{{\mathbb L}-\lambda}\,{\mathbb Q} \,\frac{1}{{\mathbb L}-\lambda} 
+ \frac{1}{{\mathbb L}+{\mathbb Q}-\lambda}\,{\mathbb Q} \,\frac{1}{{\mathbb L}+{\mathbb Q}-\lambda}\Big)\,d\lambda.
\end{multline}
Obviously, we can take the limit over a sequence of $R$ separated from $|\lambda^0_N|^{\frac 1n}$.

We claim that the first integral in the right-hand side disappears at infinity. Indeed, it can be estimated as follows:
\begin{multline}\label{firstterm}
\int\limits_{|\lambda| = R^n}\lambda\,\Sp \,\Big(\Big(\frac{1}{{\mathbb L}-\lambda}
-\frac{1}{{\mathbb L}+{\mathbb Q}-\lambda}\Big)\,{\mathbb Q}\,\Big(\frac{1}{{\mathbb L}-\lambda}-\frac{1}{{\mathbb L}+{\mathbb Q}-\lambda}\Big)\Big)\, d\lambda\\
=\int\limits_{|\lambda| = R^n}\lambda\,\Sp \,\Big(\Big(\frac{1}{{\mathbb L}-\lambda}\,{\mathbb Q}\,
\frac{1}{{\mathbb L}+{\mathbb Q}-\lambda}\Big)\,{\mathbb Q}\,\Big(\frac{1}{{\mathbb L}-\lambda}\,{\mathbb Q}\,\frac{1}{{\mathbb L}+{\mathbb Q}-\lambda}\Big)\Big)\, d\lambda\\
 = O(R^{2 - \frac{4(n-1)}{n}})
\end{multline}
by inequality \eqref{grfest}. If $n > 2$, then this value tends to zero. In the remaining case we replace the first $\frac{1}{{\mathbb L}-\lambda}$ in 
(\ref{firstterm}) by $\frac{1}{{\mathbb L}_0 - \lambda}$. The difference tends to zero by Theorem~\ref{Spint} while the changed integral can be estimated 
with the help of the first part of Lemma~\ref{asy} and the Lebesgue Dominated Convergence theorem in the same way as we did at the end of the proof 
of Theorem~\ref{Spint}. Thus, the claim follows.

The second integral can be transformed as follows:
\begin{multline}\label{secondterm}
\int\limits_{|\lambda| = R^n}\lambda\,\Sp \,\Big(\frac{1}{{\mathbb L}-\lambda}\,{\mathbb Q} \,\frac{1}{{\mathbb L}-\lambda} 
+ \frac{1}{{\mathbb L}+{\mathbb Q}-\lambda}\,{\mathbb Q} \,\frac{1}{{\mathbb L}+{\mathbb Q}-\lambda}\Big)\,d\lambda \\
=\int\limits_{|\lambda| = R^n}\Sp\, \Big(\Big(\frac{\lambda}{({\mathbb L}-\lambda)^2}+\frac{\lambda}{({\mathbb L}+{\mathbb Q}-\lambda)^2}\Big)\,{\mathbb Q}\Big)\,d\lambda\\
=-\int\limits_{|\lambda| = R^n}\Sp\, \Big(\Big(\frac{1}{{\mathbb L}-\lambda} + \frac{1}{{\mathbb L}+{\mathbb Q}-\lambda}\Big)\,{\mathbb Q}\Big)\, d\lambda\\
=-2\int\limits_{|\lambda| = R^n}\Sp\, \Big(\frac{1}{{\mathbb L}_0-\lambda}\,{\mathbb Q}\Big)\, d\lambda+o(1),\quad R\to\infty.
\end{multline}
The first equality in (\ref{secondterm}) is identity $\Sp \,(ABC)=\Sp \,(BCA)$, the second one is integration by parts, and the third one 
follows from Theorem~\ref{Spint}. Thus, we arrive at
\begin{multline}\label{FunF}
{\cal S}(q)=-\frac{1}{2\pi i}\lim_{R \to \infty}\int\limits_{|\lambda| = R^n}\int\limits_a^b q(x)G_0(x,x,\lambda)\,dxd\lambda\\
=-\frac{1}{2\pi i}\lim\limits_{R \to \infty} \int\limits_{R(\Gamma_1\cup \Gamma_2)}\int\limits_a^b q(x)\Tilde{G_0}(x,x,z)nz^{n-1}dxdz
=\frac{1}{2\pi}\sum\limits_{\alpha,\beta=1}^n\I_{\alpha,\beta},
\end{multline}
where 
\begin{equation}\label{integrals}
\I_{\alpha,\beta}=\lim\limits_{R \to \infty} \int\limits_{R(\Gamma_1\cup \Gamma_2)}\int\limits_a^b
q(x) \rho^{\alpha-1}e^{izx(\rho^{\beta-1}-\rho^{\alpha-1})}\cdot \frac{\Delta_{\alpha,\beta}(z)}{\Delta(z)}\, dxdz.
\end{equation}
The last equality in~(\ref{FunF}) holds because of relation $\Tilde{K_0}(x,x,z)=0$.\medskip

If $\alpha=\beta$, the integral (\ref{integrals}) equals zero by the assumption $\int\limits_a^b q(x) dx =0$. So we turn to the case $\alpha \ne \beta$.
We use the asymptotic formulas for the quotients $\frac {\Delta_{\alpha,\beta}}{\Delta}$ obtained in the proof of Lemma~\ref{asy}. 

Denote by $\I^{[\kappa]}_{\alpha,\beta}$, $\kappa=1,2$, the same limit as $\I_{\alpha,\beta}$ but with the inner integral taken over 
$R\Gamma_\kappa$ instead of $R(\Gamma_1\cup\Gamma_2)$. Then $\I_{\alpha,\beta}=\I^{[1]}_{\alpha,\beta}+\I^{[2]}_{\alpha,\beta}$. There are four subcases. 

\subparagraph{Subcase $1$: $\alpha,\beta \leqslant \nu_\kappa$.}
We use (\ref{<<}) to write
\begin{multline}\label{eq100}
\I^{[\kappa]}_{\alpha,\beta}=
\frac{\hat\Delta^{[\kappa]}_{\alpha,\beta}}{\hat\Delta^{[\kappa]}}\cdot\rho^{\alpha-1}\lim\limits_{R \to \infty} \int\limits_{\Gamma_\kappa} 
\int\limits_a^b R q(x) e^{iRw(\rho^{\beta-1}(x-a)+(b-x)\rho^{\alpha-1})}\,dxdw\\
+\rho^{\alpha-1}\lim\limits_{R \to \infty} \int\limits_{\Gamma_\kappa} \big(O(1)+O(R\ff(Rw))\big) 
\int\limits_a^b q(x) e^{iRw(\rho^{\beta-1}(x-a)+(b-x)\rho^{\alpha-1})}\, dxdw.
\end{multline}
The last term here can be estimated as follows:
\begin{multline*}
\bigg|\int\limits_{\Gamma_\kappa} \big(O(1)+O(\ff(Rw))\big) 
\int\limits_a^b q(x) e^{iRw(\rho^{\beta-1}(x-a)+(b-x)\rho^{\alpha-1})}\, dxdw\bigg|
\leq\\
\sup\limits_{w \in \Gamma_\kappa} \bigg|\int\limits_a^b q(x) e^{iRw(\rho^{\beta-1}(x-a)+(b-x)\rho^{\alpha-1})} dx\bigg|\cdot
\int\limits_{\Gamma_\kappa} \big(O(1)+O(\ff(Rw))\big)\,|dw|.
\end{multline*}
The first factor tends to zero by Proposition~\ref{RL} as $R\to\infty$, while the second one is bounded by Proposition~\ref{bound} (see Appendix). Therefore, we obtain
\begin{equation}\label{eq101}
\I^{[\kappa]}_{\alpha,\beta}=
\frac{\hat\Delta^{[\kappa]}_{\alpha,\beta}}{\hat\Delta^{[\kappa]}}\cdot\rho^{\alpha-1}\lim\limits_{R \to \infty} \int\limits_{\Gamma_\kappa}
\int\limits_a^b R q(x) e^{iRw(\rho^{\beta-1}(x-a)+\rho^{\alpha-1}(b-x))}dxdw.
\end{equation}

The same calculations for three other subcases give the following formulas.
\subparagraph{Subcase $2$: $\alpha \leqslant \nu_\kappa < \beta$.}
\begin{equation}\label{eq102}
\I^{[\kappa]}_{\alpha,\beta}=\frac{\hat\Delta^{[\kappa]}_{\alpha,\beta}}{\hat\Delta^{[\kappa]}}\cdot\rho^{\alpha-1}\lim\limits_{R \to \infty} 
\int\limits_{\Gamma_\kappa}\int\limits_a^b R q(x) e^{iRw(\rho^{\alpha-1}-\rho^{\beta-1})(b-x)}\,dxdw.
\end{equation}

\subparagraph{Subcase $3$: $\alpha$, $\beta > \nu_\kappa$.}
\begin{equation}\label{eq103}
\I^{[\kappa]}_{\alpha,\beta}= -\frac{\hat\Delta^{[\kappa]}_{\alpha,\beta}}{\hat\Delta^{[\kappa]}}\cdot\rho^{\alpha-1}\lim\limits_{R \to \infty} 
\int\limits_{\Gamma_\kappa}\int\limits_a^b R q(x) e^{iRw(\rho^{\beta-1}(x-b)+\rho^{\alpha-1}(a-x))}\,dxdw.
\end{equation}

\subparagraph{Subcase $4$: $\alpha > \nu_\kappa \geqslant \beta$.}
\begin{equation}\label{eq104}
\I^{[\kappa]}_{\alpha,\beta}=-\frac{\hat\Delta^{[\kappa]}_{\alpha,\beta}}{\hat\Delta^{[\kappa]}}\cdot\rho^{\alpha-1}\lim\limits_{R \to \infty} 
\int\limits_{\Gamma_\kappa}\int\limits_a^b R q(x) e^{iRw(\rho^{\beta-1}-\rho^{\alpha-1})(x-a)}dwdx.
\end{equation}

In the subcase 1 we integrate with respect to $w$ and obtain
\begin{multline*}
\I^{[\kappa]}_{\alpha,\beta}=\frac{\hat\Delta^{[\kappa]}_{\alpha,\beta}}{\hat\Delta^{[\kappa]}}\cdot\rho^{\alpha-1}\\
\times\lim\limits_{R \to \infty} \int\limits_a^b q(x)\, 
\frac{e^{iR(\rho^{\beta-1}(x-a)+\rho^{\alpha-1}(b-x))(\sqrt{\rho})^\kappa}-e^{iR(\rho^{\beta-1}(x-a)+\rho^{\alpha-1}(b-x))
(\sqrt{\rho})^{\kappa-1}}}{i(\rho^{\beta-1}(x-a)+\rho^{\alpha-1}(b-x))}\,dx,
\end{multline*}
where $\sqrt{\rho}=e^{\frac{i\pi}{n}}$. Here the denominator is uniformly separated from zero, and the numerator is uniformly bounded. Thus, the integrand has a summable majorant $C|q(x)|$.
Moreover, since $\alpha\ne\beta$ and $\alpha,\beta \leqslant \nu_\kappa$, the numerator tends to zero for a.e. $x \in [a,b]$. By the
Lebesgue Dominated Convergence theorem, $I^{[\kappa]}_{\alpha,\beta}=0$. The same arguments show that $I^{[\kappa]}_{\alpha,\beta}=0$ in the subcase~3.

In subcases 2 and 4 after integration with respect to $w$ the denominators are not separated from zero. So, we should use the regularity of $q$ at the endpoints. 
Namely, under assumptions of Theorem~\ref{gentraceth} the functions $\psi_a$ and $\psi_b$ belong to $W^1_1([a,b])$, and
\begin{equation}\label{psi_ab}
q(x)=\psi_a(x)+(x-a)\psi'_a(x)=\psi_b(x)+(x-b)\psi'_b(x).
\end{equation}

Let us consider subcase 4. Using the first equality in (\ref{psi_ab}) we obtain
\begin{multline*}
\I^{[\kappa]}_{\alpha,\beta}=
-\frac{\hat\Delta^{[\kappa]}_{\alpha,\beta}}{\hat\Delta^{[\kappa]}}\cdot\rho^{\alpha-1}\lim\limits_{R \to \infty} \int\limits_{\Gamma_\kappa} 
\int\limits_a^b R \psi_a(x) e^{iRw(\rho^{\beta-1}-\rho^{\alpha-1})(x-a)}\,dxdw\\
-\frac{\hat\Delta^{[\kappa]}_{\alpha,\beta}}{\hat\Delta^{[\kappa]}}\cdot\rho^{\alpha-1}\lim\limits_{R \to \infty} \int\limits_a^b 
\psi'_a(x)\, \frac{e^{iR(\rho^{\beta-1}-\rho^{\alpha-1})(x-a)(\sqrt{\rho})^{\kappa}}-e^{iR(\rho^{\beta-1}-\rho^{\alpha-1})(x-a)(\sqrt{\rho})^{\kappa-1}}}
{i(\rho^{\beta-1}-\rho^{\alpha-1})}\,dx.
\end{multline*}
Since $\alpha > \nu_\kappa \geqslant \beta$, the last limit equals zero by Proposition~\ref{RL}. So, integrating by parts, we have
\begin{multline*}
\I^{[\kappa]}_{\alpha,\beta}=-\frac{\hat\Delta^{[\kappa]}_{\alpha,\beta}}{\hat\Delta^{[\kappa]}}\cdot\rho^{\alpha-1}\\
\times\lim\limits_{R \to \infty} \int\limits_{\Gamma_\kappa} 
\bigg[\psi_a(x)\,\frac{e^{iRw(\rho^{\beta-1}-\rho^{\alpha-1})(x-a)}}{iw(\rho^{\beta-1}-\rho^{\alpha-1})}\bigg|_a^b-
\int\limits_a^b \psi'_a(x)\, \frac{e^{iRw(\rho^{\beta-1}-\rho^{\alpha-1})(x-a)}}{iw(\rho^{\beta-1}-\rho^{\alpha-1})}\,dx\bigg]\,dw.
\end{multline*}
The last term here also tends to zero by Proposition~\ref{RL}. Moreover, the term with substitution $x=b$ tends to zero by the
Lebesgue Dominated Convergence theorem, and we arrive at
\begin{equation*}
\I^{[\kappa]}_{\alpha,\beta}=\frac{\hat\Delta^{[\kappa]}_{\alpha,\beta}}{\hat\Delta^{[\kappa]}}\cdot 
\frac{\rho^{\alpha-1}}{i(\rho^{\beta-1}-\rho^{\alpha-1})}\, \psi_a(a+)\cdot \int\limits_{\Gamma_\kappa}\frac{dw}{w}=
\frac{\pi}{n}\,\frac{\hat\Delta^{[\kappa]}_{\alpha,\beta}}{\hat\Delta^{[\kappa]}}\,\frac{\rho^{\alpha-1}}{\rho^{\beta-1}-\rho^{\alpha-1}}\,
\psi_a(a+).
\end{equation*}

By Cramer's rule, for all $\alpha > \nu_\kappa \geqslant \beta$ we have 
$$
\frac{\hat\Delta^{[\kappa]}_{\alpha,\beta}}{\hat\Delta^{[\kappa]}} = ((\hat {\cal W}^{[\kappa]})^{-1}{\cal A})_{\beta\alpha},
$$
and thus
\begin{equation*}
\I^{[\kappa]}_{\alpha,\beta}=\frac{\pi}{n}\,\psi_a(a+)\cdot
((\hat {\cal W}^{[\kappa]})^{-1}{\cal A})_{\beta\alpha} {\cal P}^{[\kappa]}_{\alpha\beta}, \qquad \beta \leqslant \nu_\kappa < \alpha,
\end{equation*}
where the matrix ${\cal P}^{[\kappa]}$ was introduced in (\ref{P}).

Since ${\cal P}^{[\kappa]}_{\alpha\beta}=0$ for other pairs $(\alpha,\beta)$, we obtain
\begin{equation}\label{eq105}
\sum\limits_{\alpha>\nu_\kappa\geq\beta} \I^{[\kappa]}_{\alpha,\beta}=\frac{\pi}{n}\,\psi_a(a+)\cdot\tr({\cal P}^{[\kappa]}(\hat {\cal W}^{[\kappa]})^{-1}{\cal A}).
\end{equation}

The same calculations for subcase 2 give
\begin{equation}\label{eq106}
\sum\limits_{\alpha\leq \nu_\kappa<\beta} \I^{[\kappa]}_{\alpha,\beta}=\frac{\pi}{n}\,\psi_b(b-)\cdot\tr({\cal Q}^{[\kappa]}(\hat {\cal W}^{[\kappa]})^{-1}{\cal B}).
\end{equation}


Since (\ref{FunF}) gives
\begin{equation*}\label{newF}
{\cal S}(q)=\frac{1}{2\pi}\sum\limits_{\alpha\ne\beta}\I_{\alpha,\beta}=\sum_{\kappa=1}^2 
\bigg(\sum_{\alpha>\nu_\kappa\geq \beta} \I^{[\kappa]}_{\alpha,\beta}+
\sum_{\alpha\leq \nu_\kappa< \beta} \I^{[\kappa]}_{\alpha,\beta}\bigg),
\end{equation*}
formula~(\ref{mainform}) follows immediately from (\ref{eq105}) and (\ref{eq106}). 

Equation (\ref{sumcoeff}) will be proved in the next subsection.

\subsection{Linear algebra calculations}\label{3.2}

In this subsection we skip index $\kappa$ for the sake of brevity.

\subsubsection{Proof of relation (\ref{sumcoeff})}

We begin with expanding ${\cal P}$ and ${\cal Q}$ into series. Consider two rows:
$$
\aligned
&v_k = (1, \rho^{k}, \rho^{2k} ,\dots, \rho^{(\nu-1)k}, 0,\dots,0);\\
&u_k = (0,\dots, 0, \rho^{\nu k}, \rho^{(\nu+1)k}, \dots, \rho^{(n-1)k}).
\endaligned
$$
Denote ${\cal P}_{(k)}=\bar{u}_k^T v_k$ and ${\cal Q}_{(k)}=\bar{v}^T_k u_k$. Then it is easy to verify that
$$
{\cal P}=-\lim\limits_{r\to 1-} \sum \limits_{k=0}^{\infty}r^k {\cal P}_{(k)};\qquad
{\cal Q}=-\lim\limits_{r\to 1-} \sum \limits_{k=0}^{\infty}r^k {\cal Q}_{(k)},
$$
and therefore
\begin{equation}\label{eq3}
\aligned
\tr({\cal P}\hat{{\cal W}}^{-1}{\cal A})= -\lim\limits_{r\to 1-} \sum \limits_{k=0}^{\infty}r^k \tr({\cal P}_{(k)} \hat{{\cal W}}^{-1}{\cal A});\\
\tr({\cal Q}\hat{{\cal W}}^{-1}{\cal B})= -\lim\limits_{r\to 1-} \sum \limits_{k=0}^{\infty}r^k \tr({\cal Q}_{(k)} \hat{{\cal W}}^{-1}{\cal B}).
\endaligned
\end{equation}

For any $k\in\mathbb{Z}$ and $ j\in\{0,1,\dots, n-1\}$ the direct calculation gives
$$
\aligned
\big({\cal A}\bar{u}_k^T\big)_{j+1} &=a_j(\rho^{\nu(d_j-k)}+\rho^{(\nu+1)(d_j-k)}+\dots+\rho^{(n-1)(d_j-k)});\\
%
%
\big(\hat{{\cal W}} \bar{v}_k^T\big)_{j+1}&=a_j(1+\rho^{d_j-k}+\rho^{2(d_j-k)}+\dots+\rho^{(\nu-1)(d_j-k)}).
\endaligned
$$
This implies
\begin{equation}\label{eq7}
{\cal A}\bar{u}_k^T+\hat{{\cal W}} \bar{v}_k^T=\sum\limits_{j=0}^{n-1} \sigma(k,d_{j})n a_j e_{j+1},
\end{equation}
where $e_j$ is $j$-th vector of standard basis, while
$$
\sigma(x,y)=\begin{cases}
1, & x\equiv y \pmod{n};\\ 0, & \mbox{otherwise.}
\end{cases}
$$

From (\ref{eq7}) we conclude that 
\begin{multline}\label{eq6}
\tr({\cal P}_{(k)} \hat{{\cal W}}^{-1}{\cal A})=\tr(v_k \hat{{\cal W}}^{-1}{\cal A} \bar{u}_k^T)=v_k\hat{{\cal W}}^{-1} {\cal A} \bar{u}_k^T\\
=- v_k\bar{v}_k^T+n\sum\limits_{j=0}^{n-1} \sigma(k,d_j) a_j v_k\hat{{\cal W}}^{-1}e_{j+1}
=-\nu+n\sum\limits_{j=0}^{n-1} \sigma(k,d_j) a_j v_k\hat{{\cal W}}^{-1}e_{j+1}.
\end{multline}
The same calculations 
give
\begin{equation}\label{eq1}
\tr({\cal Q}_{(k)}\hat{{\cal W}}^{-1}{\cal B})=-(n-\nu)+n\sum\limits_{j=0}^{n-1}\sigma(k,d_j)b_j u_k\hat{{\cal W}}^{-1}e_{j+1}.
\end{equation}

Since $\sigma(k,d_j)(a_jv_k+b_j u_k)=\sigma(k,d_j)e_{j+1}^T{\cal W}$, $j\in\{0,1,\dots,n-1\}$, formulas (\ref{eq3}), (\ref{eq6}) and (\ref{eq1}) imply
\begin{multline*}
\tr({\cal P}\hat{{\cal W}}^{-1}{\cal A})+\tr({\cal Q}\hat{{\cal W}}^{-1}{\cal B})=-\lim\limits_{r\to 1-}\sum\limits_{k=0}^\infty r^k
\Big(\tr({\cal P}_{(k)}\hat{{\cal W}}^{-1}{\cal A})+\tr({\cal Q}_{(k)}\hat{{\cal W}}^{-1}{\cal B})\Big)\\
=\lim\limits_{r\to 1-}\sum\limits_{k=0}^\infty r^k(n-n\sum\limits_{j=0}^{n-1}\sigma(k,d_j))
=\lim\limits_{r\to 1-}\Big(\frac{n}{1-r}-n\sum\limits_{j=0}^{n-1}\frac{r^{d_j}}{1-r^{n}}\Big)=\\
\lim\limits_{r\to 1-}\Big(\frac{n}{1-r}-\frac{n^2}{1-r^{n}}+n\sum\limits_{j=0}^{n-1}\frac{1-r^{d_j}}{1-r^{n}}\Big)=
\sum\limits_{j=0}^{n-1}d_j-\frac{n(n-1)}{2},
\end{multline*}
and (\ref{sumcoeff}) follows.

\subsubsection{Proof of relation (\ref{sep-even})}
Now we consider the case of almost separated boundary conditions. First, let $n=2m$. 

We introduce three sets: 
$$
I=\{k\geqslant 0 \ \colon k\equiv d_j \pmod{n} \ \ \mbox{for some}\ \  j<m \};
$$
$$
I_1=\{d_0,d_1,\dots,d_{m-1}\};\qquad I_2=\{0,\dots,2m-1\}\setminus I_1.
$$
For all $k\ge0$ the rows $v_k$ lie in the subspace $\Span\big\{e_{j+1}^T{\cal W}\,:\,j\in\{0,1,\dots,m-1\}\big\}$. Therefore, 
$v_k\hat{{\cal W}}^{-1}e_{j+1}=0$ for $j \geqslant m$.

If $k \in I$, then $k\equiv d_j \pmod{n}$ for a unique $j<m$. Hence $a_jv_k=e_{j+1}^T\hat{{\cal W}}$ and
$$
\sum\limits_{j=0}^{m-1} \sigma(k,d_j) a_j v_k\hat{{\cal W}}^{-1}e_{j+1}^T=1.
$$
Thus, by (\ref{eq6}), $\tr({\cal P}_{(k)} \hat{{\cal W}}^{-1}{\cal A})=m$ for $k\in I$. 

On the other hand, $\tr({\cal P}_{(k)} \hat{{\cal W}}^{-1}{\cal A})=-m$ for $k\notin I$, as $\sigma(k,d_j)=0$ for all $j<m$.

By (\ref{eq3}), we obtain
\begin{multline}\label{eq4}
\tr({\cal P} \hat{{\cal W}}^{-1}{\cal A})=-\lim\limits_{r\to 1-} \Big( \sum \limits_{k\in I} r^k m-\sum \limits_{0 \leqslant k \notin I} r^k m\Big)=
-m\lim\limits_{r\to 1-} \Big( \sum \limits_{k\in I} r^{k}-\sum \limits_{k\notin I} r^k \Big)\\
=-m\lim\limits_{r\to 1-} \Big( \sum \limits_{k\in I_1} \frac{r^{k}}{1-r^{2m}} - \sum \limits_{k\in I_2} \frac{r^{k}}{1-r^{2m}}\Big)=
m\lim\limits_{r\to 1-} \Big( \sum \limits_{k\in I_1} \frac{1-r^{k}}{1-r^{2m}} - \sum \limits_{k\in I_2} \frac{1-r^{k}}{1-r^{2m}}\Big)\\
=\frac{1}{2}\Big( \sum \limits_{k\in I_1}k -\sum \limits_{k\in I_2}k\Big)=
\sum\limits_{j=0}^{m-1}d_j-\frac{m(2m-1)}{2}.
\end{multline}
The same calculations for the second term in (\ref{mainform}) prove (\ref{sep-even}).

\subsubsection{Proof of relation (\ref{sep-odd})}

Now let $n=2m+1$. For $\kappa=2$ the previous arguments run almost without changing and give
\begin{equation*}
\tr({\cal P}^{[2]} (\hat{\cal W}^{[2]})^{-1}{\cal A})=
\sum\limits_{j=0}^{m-1}d_j-m^2.
\end{equation*}
The same calculations give
\begin{equation*}
\tr({\cal Q}^{[1]} (\hat{\cal W}^{[1]})^{-1}{\cal B})=
\sum\limits_{j=m+1}^{2m-1}d_j-m^2.
\end{equation*}
Substituting these formulas into (\ref{mainform}) and taking into account (\ref{sumcoeff}) we arrive at (\ref{sep-odd}).

\subsubsection{Proof of relation (\ref{periodic})}

Without loss of generality, we can assume that 
$$
a_j=1,\quad b_j=\vartheta, \quad d_j=j, \quad j \in \{0,\dots,n-1\}.$$
One can easily check that 
$$\hat{{\cal W}}^{-1}e_{j+1}^T=\frac{1}{n}
\left(1,\rho^{-j},\dots, \rho^{-(\nu-1)j}, \frac{1}{\vartheta}\rho^{-\nu j}, \dots, \frac{1}{\vartheta}\rho^{-(n-1)j}\right)^T,$$
so $\sigma(k,j)nv_k\hat{{\cal W}}^{-1}e_{j+1}^T=\sigma(k,j)\nu$. By (\ref{eq6}), for every $k \geqslant 0$ we have 
$$\tr({\cal P}_{(k)} \hat{{\cal W}}^{-1}{\cal A})=0.$$
Thus we obtain that $\tr({\cal P} \hat{{\cal W}}^{-1}{\cal A})=0$. Similarly, $\tr({\cal Q} \hat{{\cal W}}^{-1}{\cal B})=0$,
and (\ref{periodic}) follows.
 

\section{Appendix}

We need two technical statements. The first one is a variant of the Riemann--Lebesgue lemma.
\begin{prop}\label{RL}
Suppose $q \in L^1[a,b]$, $\Gamma \subset \{z \in \mathbb{C} \colon |z|=1\}$. Let $k_1, k_2\in \mathbb{C}$ satisfy $k_1 \ne 0$ and 
$Re(iw(k_1x+k_2))\leq 0$ for all $x \in [a,b]$ and $w \in \Gamma$. Then the following relation holds uniformly for $w \in \Gamma$.
$$
\int\limits_a^b q(x)e^{iRw(k_1x+k_2)}dx \to 0, \quad R \to +\infty.
$$
\end{prop}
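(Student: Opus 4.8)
The plan is to prove this Riemann--Lebesgue-type statement by reducing to the classical lemma and then handling the uniformity in $w$ by a decay estimate coming from the hypothesis $\mathrm{Re}(iw(k_1x+k_2))\leq 0$. First I would observe that for each fixed $w\in\Gamma$ the standard Riemann--Lebesgue lemma already gives $\int_a^b q(x)e^{iRw(k_1x+k_2)}\,dx\to 0$, since $q\in L^1[a,b]$ and $k_1\neq 0$ forces the frequency $Rwk_1$ to tend to infinity in modulus. The whole content is therefore the \emph{uniformity} over $w\in\Gamma$, and the natural tool is an $\eps$-approximation argument where the approximating class enjoys genuine decay rather than mere oscillation.

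The key steps, in order, are as follows. Fix $\eps>0$ and choose a smooth (say $C^1$) function $g$ with $\|q-g\|_{L^1[a,b]}<\eps$; such $g$ exists by density. The error term contributes at most $\int_a^b|q-g|\,|e^{iRw(k_1x+k_2)}|\,dx$, and here the hypothesis $\mathrm{Re}(iw(k_1x+k_2))\leq 0$ is exactly what guarantees $|e^{iRw(k_1x+k_2)}|=e^{R\,\mathrm{Re}(iw(k_1x+k_2))}\leq 1$ for all $x\in[a,b]$, $w\in\Gamma$, and $R>0$. Thus the error is bounded by $\|q-g\|_1<\eps$ uniformly in $w$ and $R$. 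For the main term with $g$, I would integrate by parts once: writing $\frac{d}{dx}e^{iRw(k_1x+k_2)}=iRwk_1\,e^{iRw(k_1x+k_2)}$, one obtains
\begin{equation*}
\int\limits_a^b g(x)e^{iRw(k_1x+k_2)}\,dx=\frac{1}{iRwk_1}\Big[g(x)e^{iRw(k_1x+k_2)}\Big]_a^b-\frac{1}{iRwk_1}\int\limits_a^b g'(x)e^{iRw(k_1x+k_2)}\,dx.
\end{equation*}
Using again $|e^{iRw(k_1x+k_2)}|\leq 1$, together with $|w|=1$ and $k_1\neq 0$, both terms are bounded by $\frac{C(g)}{R|k_1|}$ with a constant depending only on $\|g\|_\infty$ and $\|g'\|_1$ but \emph{not} on $w$. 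Hence the $g$-term is at most $C(g)/R$, which is $<\eps$ once $R$ is large enough uniformly in $w$.

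Combining the two pieces, for $R$ sufficiently large the whole integral is bounded by $2\eps$ for every $w\in\Gamma$, which is precisely the claimed uniform convergence. The main obstacle, such as it is, lies in making the uniformity transparent: one must be careful that the decay rate produced by integration by parts involves $|w|=1$ (so $w$ never degenerates) and that the boundedness $|e^{iRw(k_1x+k_2)}|\leq 1$ holds simultaneously for all $x$ and $w$ — both of which are delivered cleanly by the stated hypotheses $|w|=1$ and $\mathrm{Re}(iw(k_1x+k_2))\leq 0$. No genuinely hard analytic difficulty remains once the density-plus-integration-by-parts scheme is set up, so the proposition is essentially a robust, uniform packaging of the classical Riemann--Lebesgue lemma.
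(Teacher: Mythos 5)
Your proof is correct and follows essentially the same route as the paper: approximate $q$ in $L^1$ by a $C^1$ function, bound the error term by $\|q-g\|_1$ using $|e^{iRw(k_1x+k_2)}|\le 1$ (which is exactly what the hypothesis $\mathrm{Re}(iw(k_1x+k_2))\le 0$ provides), and gain a factor $1/(R|k_1|)$ on the smooth part by one integration by parts. The only cosmetic difference is that the paper chooses the approximant with $q_1(a)=q_1(b)=0$ so that no boundary terms arise, whereas you retain them and bound them by $2\|g\|_\infty/(R|k_1|)$ via the same exponential bound --- both handlings are equally valid.
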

\begin{proof}
Fix some $\eps>0$. Let a function $q_1 \in C^1([a,b])$ satisfy $q_1(a)=q_1(b)=0$ and $\int\limits_a^b |q-q_1|\leq \frac{\eps}{2}$. 
Then for $R$ large enough the following estimate holds:
$$
\left|\int\limits_a^b q_1(x)e^{iRw(k_1x+k_2)}dx\right|=\left|\frac{1}{iRwk_1}\int\limits_a^b q'_1(x)e^{iRw(k_1x+k_2)}dx\right|\leq 
\frac{1}{R|k_1|}\int\limits_a^b |q'_1|<\frac{\eps}{2}.
$$
Trivial estimate 
$$
\left|\int\limits_a^b (q(x)-q_1(x))e^{iRw(k_1x+k_2)}dx\right| \leq \int\limits_a^b |q-q_1|\leq \frac{\eps}{2}
$$
completes the proof.
\end{proof}

The second statement concerns the function $\ff(Rw)$ introduced by formula~(\ref{ff}).
\begin{prop}\label{bound}
There exists some constant $M>0$ such that for all $R>0$.
$$
\int\limits_{\Gamma_1 \cup \Gamma_2} R \ff(Rw)\, |dw| <M
$$
\end{prop}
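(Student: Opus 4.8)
The plan is to prove Proposition~\ref{bound} by recalling the definition
$$
\ff(Rw)=\sum\limits_{k=1}^{\nu(w)}|e^{iRw\rho^{k-1}(b-a)}|+\sum\limits_{k=\nu(w)+1}^{n}|e^{-iRw\rho^{k-1}(b-a)}|
$$
and observing that each summand is of the form $e^{-R\,c_k(w)(b-a)}$ with $c_k(w)=|Re(iw\rho^{k-1})|>0$ for $w$ in the interior of $\Gamma_1\cup\Gamma_2$. Indeed, by the sign analysis recorded in the proof of Lemma~\ref{asy}, for $k\le\nu(w)$ one has $Re(iw\rho^{k-1})<0$, so $|e^{iRw\rho^{k-1}(b-a)}|=e^{Re(iRw\rho^{k-1})(b-a)}=e^{-R|Re(iw\rho^{k-1})|(b-a)}$; symmetrically for $k>\nu(w)$ the factor $-iw\rho^{k-1}$ has negative real part and the second sum decays in the same fashion. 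Thus $R\ff(Rw)$ is a finite sum of terms $R\,e^{-R\,c_k(w)(b-a)}$, and the task is to bound the integral of this quantity over $\Gamma_1\cup\Gamma_2$ uniformly in $R$.

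First I would reduce the problem to a single scalar estimate. Since there are only finitely many terms (at most $n$), it suffices to bound
$$
\int\limits_{\Gamma_1\cup\Gamma_2} R\,e^{-R\,c_k(w)(b-a)}\,|dw|
$$
for each fixed $k$, and then sum. Parametrizing $w=e^{i\phi}$, the exponent $c_k(w)=|Re(ie^{i\phi}\rho^{k-1})|$ is a smooth function of $\phi$ that vanishes precisely at the endpoints of the relevant arc (where $Re(iw\rho^{k-1})=0$, i.e. where $w\rho^{k-1}$ becomes real) and is strictly positive in the interior. Near such an endpoint $\phi_0$ the function behaves like $c_k(w)\approx C|\phi-\phi_0|$ with $C>0$, because the real part crosses zero transversally (its derivative in $\phi$ is nonzero there, as $iw\rho^{k-1}$ rotates at unit angular speed).

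The main estimate is then a standard Laplace/Watson-type bound: after the change of variable near the vanishing point, one is left with controlling
$$
\int_0^{\text{const}} R\,e^{-R C (b-a) t}\,dt\le \frac{1}{C(b-a)},
$$
which is bounded uniformly in $R$ by the constant $\frac{1}{C(b-a)}$. Away from the endpoints the exponent $c_k(w)$ is bounded below by a positive constant, so there $R\,e^{-R\,c_k(w)(b-a)}\le R\,e^{-R\,\delta(b-a)}\to 0$ as $R\to\infty$ and is in any case uniformly bounded for $R>0$. Collecting the contributions of the at most $n$ terms yields a single constant $M$ independent of $R$, as required.

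The step I expect to be the main technical obstacle is verifying the transversality of the zero crossing of $Re(iw\rho^{k-1})$ at the arc endpoints and making the local linearization $c_k(w)\ge C|\phi-\phi_0|$ precise with a uniform constant $C>0$; once that linear lower bound is in hand the integral bound $R\int e^{-RCt}\,dt\le 1/C$ is immediate and the uniformity in $R$ follows at once.
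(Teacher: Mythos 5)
Your proposal is correct and takes essentially the same approach as the paper: the paper also treats $\ff$ term by term, dismisses the terms whose exponent has real part bounded away from zero on the whole arc, and for the terms vanishing at an arc endpoint writes $|e^{iRw(b-a)}|=e^{-R(b-a)\sin\phi}$ and uses the linear lower bound $\sin\phi\geq\frac{2}{\pi}\phi$ — exactly your transversality estimate $c_k(w)\geq C|\phi-\phi_0|$ — followed by the same uniform bound $\int R\,e^{-cR\phi}\,d\phi\leq \frac1c$.
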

\begin{proof}
We need to estimate several integrals of the same type. Most of them are exponentially small because the real part of the index is strictly less 
than zero on the whole arc $\Gamma_1 \cup \Gamma_2$. There are few integrals where the real part of the index tends to zero on the end of the arc. 
We write estimates for one of such integrals: 
$$
\int\limits_{\Gamma_1\cup \Gamma_2}R|e^{iRw(b-a)}|\,|dw| = 
\int\limits_0^{\frac{\pi}{n}}R e^{-R(b-a)\sin\phi}\,d\phi \leq
\int\limits_0^{\frac{\pi}{n}}R e^{-\frac{2}{\pi}R(b-a)\phi}\,d\phi< \frac {\pi}{2(b-a)}. 
$$
The other ones are estimated in the same way.
\end{proof}

\section{Acknowledgements}
The first author is supported by St.Petersburg State University grant 6.38.64.2012, 
the second author is supported by RFBR grant 11-01-00526, the second and the third authors are supported by Chebyshev Laboratory (SPbU), 
RF Government grant 11.G34.31.0026.

The authors are grateful to A. Minkin for his helpful advice and attracting our attention to the monograph \cite{Minkin}.

\end{document}